\newcommand{\R}{\mathbb{R}}
\newcommand{\E}{\mathbb{E}}
\newcommand{\N}{\mathbb{N}}
\newcommand{\1}{\mathbf{1}}
\newcommand{\Prob}{\mathbb{P}}
\newcommand{\Q}{\mathbb{Q}}
\newcommand{\CF}{\mathcal{F}}
\newcommand{\dd}{\mathrm{d}}
\newtheorem{thm}{Theorem}
\newtheorem{prop}{Proposition}
\newtheorem{lemma}{Lemma}
\newtheorem{cor}{Corollary}
\theoremstyle{remark}
\newtheorem{ex}{Example}
\theoremstyle{remark}
\newtheorem{defin}{Definition}
\theoremstyle{remark}
\theoremstyle{remark}
\newtheorem{remark}{Remark}
\theoremstyle{remark}
\title{A new proof for the conditions of Novikov and Kazamaki\thanks{I am very grateful to Ioannis Karatzas for his encouragement  to pursue this project. I thank Sara Biagini, Tomoyuki Ichiba, Kostas Kardaras, Martin Larsson, Gechung Liang, Marcel Nutz, Sergio Pulido, and Mathieu Rosenbaum for many helpful discussions on the subject matter. I am indebted to an anonymous referee for her or his careful comments, which lead to a substantial improvement of this paper. This paper is dedicated to Ioannis Karatzas in honor of his 60th birthday.}}
\author{Johannes Ruf\footnote{E-mail: johannes.ruf@oxford-man.ox.ac.uk} \\
    Oxford-Man Institute of Quantitative Finance and Mathematical Institute\\University of Oxford\\ 
    \today 
    }
\date{}
\begin{document}
\thispagestyle{plain} \maketitle
\begin{abstract}
    This paper provides a novel proof for the sufficiency of certain well-known criteria that
    guarantee the martingale property of a continuous, nonnegative local martingale. More precisely, it is shown that
    generalizations of Novikov's condition and Kazamaki's criterion follow directly from
    the existence of F\"ollmer's measure. This approach allows to extend well-known criteria of martingality from strictly positive to only nonnegative, continuous local martingales.
    
    {\bf Keywords: } Local martingale; stochastic exponential; F\"ollmer's measure; uniform integrability; lower function; Bessel process
\end{abstract}
\noindent

\section{Introduction}
Fix a continuous, nonnegative local martingale $Z^L$ of the form $Z^L = \mathcal{E}(L):= \exp(L - \langle L\rangle/2)$ on some filtered probability space $(\Omega, \mathbb{F}, \{\CF_t\}_{t \geq 0}, \Prob)$. Here, $L$ denotes  another continuous local martingale on $[0, T_0)$ with $L_0 = 0$,
where $T_0$ is the first hitting time of zero by $Z^L$. The stopping time $T_0$ is also the first hitting time of infinity by the quadratic variation $\langle L \rangle$, as we will demonstrate below.  We refer the reader to Subsection~\ref{SS prelims} for the precise definition of a local martingale on a stochastic interval $[0, T_0)$. 

We are interested in establishing sufficient conditions that guarantee that $Z^L$ is a
(uniformly integrable) martingale, namely, that $Z^L$ satisfies $Z^L_t = \E[Z^L_T|\CF_t]$ for some fixed time horizon $T \in [0,\infty]$.
Towards this end, let $\mathcal{T}$ denote
the set of all stopping times $\tau$ for which there exists some $n_{\tau} \in \N$ with $\tau \leq (T-1/n_{\tau}) \wedge n_{\tau}$.  Then, in Section~\ref{S proof}, we shall prove the following result:
\begin{thm}[Abstract version of the Novikov-Kazamaki conditions]  \label{C abstract}
    Let $f: \R \times [0,\infty) \rightarrow [0,\infty)$ denote a continuous function such that 
        \begin{align*} 
            \limsup_{t \uparrow \infty} f(B_t+t,t) \cdot \exp\left(-B_t - \frac{t}{2}\right) = \infty
        \end{align*}
    almost surely for some (and thus, for any) Brownian motion $B$. If
    \begin{align}   \label{E cond0}
        \sup_{\tau \in \mathcal{T}} \left\{\E\left[f(L_\tau, \langle L \rangle_\tau)  \1_{\{Z^L_\tau > 0\}}\right]\right\}  < \infty,
    \end{align}    
    then $Z^L$ is a (uniformly integrable) martingale on $[0,T]$.
\end{thm}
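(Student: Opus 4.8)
The plan is to derive the martingale property of $Z^L$ from the existence of F\"ollmer's measure $\Q$ together with the Dambis--Dubins--Schwarz representation of continuous local martingales. Recall that, after passing to a suitable extension of the probability space carrying $\Q$ and (extended versions of) the processes $L$, $\langle L\rangle$ and $Z^L$, the process $Z^L$ is a uniformly integrable martingale on $[0,T]$ \emph{if and only if} $\Q[\mathcal A]=0$, where $\mathcal A := \{\sup_{t\in[0,T)}\langle L\rangle_t = \infty\}$ is the event that the quadratic variation explodes within the horizon; see Subsection~\ref{SS prelims}. Thus I would argue by contradiction: assume $\Q[\mathcal A]>0$ and show that this is incompatible with \eqref{E cond0}.

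The next step is to identify the law of $L$ under $\Q$. By construction of $\Q$ (a Girsanov-type change of measure with ``density'' $Z^L$), the process $L - \langle L\rangle$ is a continuous $\Q$-local martingale with quadratic variation $\langle L\rangle$, so by the Dambis--Dubins--Schwarz theorem --- on a further, harmless enlargement of the space if needed --- there is a $\Q$-Brownian motion $\beta$ on $[0,\infty)$ with $L_t = \beta_{\langle L\rangle_t} + \langle L\rangle_t$ on the stochastic interval on which $Z^L$ takes values in $(0,\infty)$. Setting $h_t := f(L_t,\langle L\rangle_t)/Z^L_t$ when $Z^L_t\in(0,\infty)$ and $h_t := 0$ otherwise, this gives
\begin{align*}
    h_t \;=\; f\!\bigl(\beta_{\langle L\rangle_t} + \langle L\rangle_t,\; \langle L\rangle_t\bigr)\exp\!\Bigl(-\beta_{\langle L\rangle_t} - \tfrac{\langle L\rangle_t}{2}\Bigr) \;=\; g(\langle L\rangle_t), \qquad g(s) := f(\beta_s + s,\,s)\,e^{-\beta_s - s/2},
\end{align*}
on that interval. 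Since $\beta$ is a $\Q$-Brownian motion, the growth hypothesis on $f$ yields $\limsup_{s\uparrow\infty}g(s)=\infty$ $\Q$-a.s.; and since on $\mathcal A$ the continuous process $\langle L\rangle$ assumes arbitrarily large finite values at times in $[0,T)$, it follows that $\sup_{t\in[0,T)}h_t = \infty$ $\Q$-a.s.\ on $\mathcal A$.

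The final step uses \eqref{E cond0}. The construction of $\Q$ gives $\E\bigl[f(L_\tau,\langle L\rangle_\tau)\1_{\{Z^L_\tau>0\}}\bigr] = \E^\Q[h_\tau]$ for every $\tau\in\mathcal T$, so \eqref{E cond0} says precisely $C := \sup_{\tau\in\mathcal T}\E^\Q[h_\tau] < \infty$. Fix $\lambda > f(0,0)$ and put $\tau_\lambda := \inf\{t\ge 0: h_t\ge\lambda\}$, which is a stopping time; note $h_0 = f(0,0) < \lambda$. On $\mathcal A$ the previous paragraph gives $\tau_\lambda < T$, and moreover $Z^L_{\tau_\lambda}\in(0,\infty)$, so $h$ is continuous at $\tau_\lambda$ and $h_{\tau_\lambda} = \lambda$. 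As $\tau_\lambda$ need not belong to $\mathcal T$, I would truncate to $\tau_\lambda^{(n)} := \tau_\lambda\wedge (T-1/n)\wedge n \in\mathcal T$; on $\mathcal A$, since $\tau_\lambda < T$, one has $\tau_\lambda^{(n)} = \tau_\lambda$ for all sufficiently large $n$ (depending on the path), so $h_{\tau_\lambda^{(n)}}\to\lambda$ on $\mathcal A$. Fatou's lemma and $h\ge 0$ then give
\begin{align*}
    \lambda\,\Q[\mathcal A] \;\le\; \E^\Q\!\Bigl[\liminf_{n\to\infty} h_{\tau_\lambda^{(n)}}\Bigr] \;\le\; \liminf_{n\to\infty}\E^\Q\!\bigl[h_{\tau_\lambda^{(n)}}\bigr] \;\le\; C .
\end{align*}
Letting $\lambda\uparrow\infty$ forces $\Q[\mathcal A]=0$, contradicting $\Q[\mathcal A]>0$; hence $Z^L$ is a uniformly integrable martingale on $[0,T]$.

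The delicate part, I expect, is the preparatory machinery rather than the displayed estimate: constructing F\"ollmer's measure on the right extended space, proving the equivalence ``$Z^L$ is a uniformly integrable martingale on $[0,T]$ $\iff \Q[\mathcal A]=0$'' (in particular for $T=\infty$, where $\mathcal A = \{\langle L\rangle_\infty = \infty\}$ is genuinely about the clock $\langle L\rangle$ and not merely about $Z^L$ hitting $0$), and running the Girsanov and Dambis--Dubins--Schwarz arguments on the stochastic interval where $Z^L\in(0,\infty)$, with the enlargement needed when $\langle L\rangle$ fails to exhaust $[0,\infty)$. The analytic heart --- turning the hypothesis on $f$ into $\sup_{t<T}h_t = \infty$ on $\mathcal A$ and then into $\lambda\,\Q[\mathcal A]\le C$ --- is short; the one subtlety is that $\tau_\lambda^{(n)}$ depends on the path, but it stabilizes along each path of $\mathcal A$ (because $\tau_\lambda<T$ there), so Fatou applies directly.
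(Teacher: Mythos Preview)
Your proof is correct and follows the same route as the paper: pass to F\"ollmer's measure $\Q$, reduce the martingale property to $\Q[\mathcal A]=0$, invoke Dambis--Dubins--Schwarz under $\Q$ to write $h_t=g(\langle L\rangle_t)$ with $\limsup_{s\uparrow\infty} g(s)=\infty$, and derive a contradiction from the change-of-measure identity together with Fatou's lemma. The one notable difference is in the limiting step: you fix the threshold $\lambda$, let the truncation $n\to\infty$ so that $\tau_\lambda^{(n)}=\tau_\lambda$ eventually on each path of $\mathcal A$, and only then send $\lambda\to\infty$. The paper instead couples threshold and truncation in a single sequence $\tau_i$; that version in fact contained a gap (one cannot guarantee the threshold $i$ is reached before the time bound $(i-1)/i$ on all of $H$), later corrected in an addendum via a Borel--Cantelli-type choice of the time bounds. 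Your two-step limit sidesteps this difficulty cleanly. Your caveat on $T=\infty$ is apt: the paper handles it by the deterministic time change $t\mapsto\tan(\pi t/2)$, reducing to $T=1$; without that reduction the equivalence ``$\Q[\mathcal A]=0\iff Z^L$ is uniformly integrable on $[0,\infty]$'' is not immediate from \eqref{E Qe}, which only applies to bounded stopping times.
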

Theorem~\ref{C abstract} applied to $f(x,y) = \exp(y/2)$ now directly implies the sufficiency of
	\begin{align*}
		\E\left[\exp\left(\frac{1}{2} \langle L \rangle_T\right)\right] < \infty
	\end{align*}
(\emph{Novikov's condition}) and, applied to $f(x,y) = \exp(x/2)$, the sufficiency of
	\begin{align}  \label{E Kaz criterion}
		\sup_{\tau \in \mathcal{T}} \E\left[\exp\left(\frac{1}{2} L_\tau\right)\right] < \infty
	\end{align}
(\emph{Kazamaki's criterion}) for the uniform integrability and martingale property of a strictly positive local martingale $Z^L$.  Both these criteria can be embedded in a large family of sufficient conditions that we shall study in Section~\ref{S further}.
To begin with, define, for any $a \in \R$ and any measurable function $\phi:[0,\infty)\rightarrow \R$, the process
    \begin{align*} 
        S^{L,a,\phi} := \exp\left(a L + \left(\frac{1}{2} - a\right) \langle L\rangle
        -|a-1| \phi(\langle L\rangle)\right)
        \1_{\{Z^L > 0\}}.
    \end{align*}

We shall need the concept of a \emph{lower function}, which we shall briefly review in the Appendix for the reader's convenience.
 For the present discussion, it is sufficient to note that a continuous function is a lower function if and only if $\limsup_{t \uparrow \infty} (B_t - \phi(t)) = \infty$ holds almost surely for some (and thus, for any) Brownian motion $B$. 

We now are  ready to formulate a generalized version of Novikov's condition and Kazamaki's criterion:
\begin{cor}[Novikov's condition and Kazamaki's criterion]  \label{t novikov}
	The stochastic exponential $Z^L = \mathcal{E}(L)$ is a (uniformly integrable) martingale on $[0,T]$ if, for some $a \in \R \setminus\{1\}$ and some continuous lower function $\phi$, we have
    \begin{align}  \label{E cond}
        \sup_{\tau \in \mathcal{T}} \left\{\E\left[S^{L,a,\phi}_{\tau} \right]\right\}  < \infty.
    \end{align}
   Applied to a strictly positive local martingale $Z^L$,  \eqref{E cond} with $a = 0$ and $\phi = 0$ implies Novikov's condition  (as $\langle L\rangle$ is increasing) and $a = 1/2$ and $\phi = 0$ implies Kazamaki's criterion.
\end{cor}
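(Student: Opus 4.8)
The plan is to obtain Corollary~\ref{t novikov} as an immediate consequence of Theorem~\ref{C abstract} by choosing the test function $f$ suitably. Given $a\in\R\setminus\{1\}$ and a continuous lower function $\phi$, I would take
\[
    f(x,y) := \exp\left(ax + \left(\frac{1}{2} - a\right)y - |a-1|\,\phi(y)\right), \qquad (x,y)\in\R\times[0,\infty).
\]
Since $\phi$ is continuous, this $f$ is a continuous map into $[0,\infty)$, and by construction $f(L_\tau,\langle L\rangle_\tau)\,\1_{\{Z^L_\tau>0\}} = S^{L,a,\phi}_\tau$ for every stopping time $\tau$; hence condition~\eqref{E cond} is literally condition~\eqref{E cond0} for this $f$. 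It therefore only remains to check that this $f$ satisfies the $\limsup$ hypothesis of Theorem~\ref{C abstract}.

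For that, fix a Brownian motion $B$ and observe that the linear-in-$t$ terms cancel:
\[
    f(B_t+t,t)\exp\left(-B_t-\frac{t}{2}\right) = \exp\bigl((a-1)B_t - |a-1|\phi(t)\bigr) = \exp\bigl(|a-1|\,(\varepsilon B_t - \phi(t))\bigr),
\]
where $\varepsilon:=\operatorname{sgn}(a-1)\in\{-1,+1\}$. Because $a\neq1$ we have $|a-1|>0$, so the left-hand side has $\limsup$ equal to $\infty$ almost surely if and only if $\limsup_{t\uparrow\infty}(\varepsilon B_t-\phi(t))=\infty$ almost surely. Now $\varepsilon B$ is again a Brownian motion, and by the characterization of lower functions recalled in the excerpt a continuous $\phi$ is a lower function precisely when $\limsup_{t\uparrow\infty}(B_t-\phi(t))=\infty$ a.s.\ for any Brownian motion $B$; applying this with the Brownian motion $\varepsilon B$ gives exactly what is needed. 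Theorem~\ref{C abstract} then yields that $Z^L=\mathcal E(L)$ is a uniformly integrable martingale on $[0,T]$.

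Finally, for the two named special cases I would specialize, using that $\1_{\{Z^L>0\}}\equiv1$ when $Z^L$ is strictly positive and that $\phi\equiv0$ is a (continuous) lower function since $\limsup_{t\uparrow\infty}B_t=\infty$ a.s. Taking $a=0$, $\phi\equiv0$ gives $S^{L,0,0}=\exp(\langle L\rangle/2)$; since $\langle L\rangle$ is increasing and every $\tau\in\mathcal T$ satisfies $\tau\le T$, Novikov's condition $\E[\exp(\langle L\rangle_T/2)]<\infty$ forces $\sup_{\tau\in\mathcal T}\E[S^{L,0,0}_\tau]<\infty$, so \eqref{E cond} holds. Taking $a=1/2$, $\phi\equiv0$ gives $S^{L,1/2,0}=\exp(L/2)$, so \eqref{E cond} is verbatim Kazamaki's criterion~\eqref{E Kaz criterion}. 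There is essentially no obstacle in this argument: all the genuine work resides in Theorem~\ref{C abstract}, and the only points requiring any care are the cancellation of the drift terms in the exponent and the (minor) remark that passing from $B$ to $\varepsilon B$ is harmless in the case $a<1$.
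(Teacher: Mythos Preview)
Your proposal is correct and follows essentially the same route as the paper: you choose the same test function $f(x,y)=\exp(ax+(1/2-a)y-|a-1|\phi(y))$, verify the $\limsup$ hypothesis via the cancellation $(a-1)B_t-|a-1|\phi(t)$, and invoke Theorem~\ref{C abstract}. The only difference is that you spell out the symmetry step $\varepsilon B$ and the two named special cases in slightly more detail than the paper does.
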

\begin{proof}
	Define $f: \R \times [0,\infty) \rightarrow [0,\infty)$ by $$f(x,y) = \exp\left(a x + \left(\frac{1}{2}-a\right)y - |a-1| \phi(y)\right)$$ for all $(x,y) \in \R \times [0,\infty)$ and let $B$ denote some Brownian motion. Then observe that
$S^{L,a,\phi} = f(L, \langle L \rangle) \1_{\{Z^L > 0\}}$ and that
        \begin{align*} 
            \limsup_{t \uparrow \infty} f(B_t+t,t) \cdot \exp\left(-B_t - \frac{t}{2}\right) = \limsup_{t \uparrow \infty} \exp\left((a-1) B_t - |a-1| \phi(t)\right) = \infty.
        \end{align*}
	Thus, an application of Theorem~\ref{C abstract} yields the statement.
\end{proof}

We emphasize that Corollary~\ref{t novikov} has been proven before, at least for strictly positive local martingales $Z^L$; we shall give an overview of the relevant literature below.
However, our proof is, to the best of our knowledge, new and seems to be shorter and simpler than the existing proofs. It relies on the existence of F\"ollmer's measure, as constructed in \citet{M}. Such a probability measure is defined for any local martingale, and, in particular, yields a necessary and sufficient condition for the martingale property of $Z^L$ in terms of an explosion of the quadratic variation $\langle L \rangle$ of $L$. With this condition, the theorem can easily be proved by contradiction. Indeed, \eqref{E cond0} guarantees that explosions of $\langle L \rangle$ cannot occur under 
F\"ollmer's measure.  



\subsection*{Review of extant literature}
We shall provide some pointers to the relevant literature  on local and true martingales. The following list is by no means close to being complete.

\citet{Girsanov_1960} posed the problem of deciding whether a stochastic exponential is a true martingale or not. \citet{Gikhman_Skorohod_1972} and \citet{Lip_Shir_1972} provided
sufficient conditions for the martingale property of a stochastic exponential. These conditions were then first generalized by \citet{Novikov} and later by
\citet{Kazamaki_1977}, who derived the cases $\phi \equiv 0$ and $a = 0$ or $a = 1/2$, respectively, in \eqref{E cond}.
\citet{Krylov_Novikov} provides a simple proof of these results.

\citet{Novikov_conditions_exp, Novikov_conditions_cont} observed that it is possible to
include lower functions in the criterion for the special cases $a = 0$ and $a =1/2$ under some Gaussian assumptions. This has been generalized to any continuous local martingale, again for the cases $a = 0$ and $a =1/2$, by \citet{ChernyShiryaev}.

\citet{Lepingle_Memin_integrabilite} showed the sufficiency of the uniform integrability of $\{S^{L,a,0}_{\tau} \}_{\tau \in \mathcal{T}}$ with $a \in [0,1)$ for the martingale property of $\mathcal{E}(L)$.
\citet{Okada_1982} extended this result by allowing lower functions of the form $\phi(t) = C\sqrt{t}$.  The most general result in the form of Corollary~\ref{t novikov}, for strictly
positive local martingales, has been provided by
\citet{Kazamaki_1983}.

If either the local martingale $L$ or $Z^L$ satisfies additional structural assumptions, then one can often give more precise sufficient, and possibly also necessary conditions. For example, if $L$ is a BMO martingale, then $Z^L$ is always a martingale, as shown in \citet{Kazamaki_1983}.  If $L$ is a stochastic integral of solutions to an SDE,  \citet{Engelbert_Schmidt_1984} and \citet{Stummer_1993}  discuss
the martingale property of $Z^L$; see also  \citet{MU_WH, MU_martingale} for a complete characterization of martingality in the one-dimensional case.
The question of martingality for a strongly Markovian process is treated in
\citet{DShir}, \citet{Kotani_2006}, \citet{Blei_Engelbert_2009}, and \citet{HP_visual}.
We refer to \citet{Mayerhofer_2011} and the references therein for necessary and sufficient conditions for $Z^L$ being a martingale if $L$ is an affine process.

We remark that the  case of a discontinuous local martingale $L$ has also been deeply studied. For an overview of the literature, we refer to \citet{Lepingle_Memin_Sur}, \citet{Kallsen_Shir}, \citet{CFY}, and \citet{Protter_Shimbo}. Finally, we note that
\citet{Elworthy_Li_Yor_97} provide a precise formula for the expectation of a continuous local martingale in terms of the tails of its quadratic variation. For further pointers to this literature, we refer to \citet{Rheinl_2010}.

\section{A new proof for Novikov- and Kazamaki-type conditions}\label{S proof}
In this section, we present the proof of Theorem~\ref{C abstract}. To the best of our knowledge, it is a novel argument, which is based on
the existence of a certain probability measure, constructed via an extension theorem applied to a consistent family of probability measures generated by stopped versions of the local martingale $Z^L$. With this tool at hand, the proof reduces to a very short argument.

\subsection{Extended stochastic exponential} \label{SS prelims}
In the spirit of Appendix~A in \citet{CFR2011}, we call a stochastic process $L$ a continuous local martingale on $[0,\tau)$ for some predictable positive stopping time $\tau>0$ if the stopped process $L^{\widetilde{\tau}}_\cdot := L_{\cdot \wedge \widetilde{\tau}} $ is a continuous local martingale for any stopping time $\widetilde{\tau}< \tau$. With $\tau = \infty$ we have the usual class of continuous local martingales.

\begin{lemma}[Extended stochastic exponential]  \label{L extended}
    Fix a predictable positive stopping time $\tau>0$ and a continuous local martingale $L$ on $[0,\tau)$ and consider the exponential local martingale
    $Z^L = \mathcal{E}(L)= \exp(L - \langle L\rangle/2)$  on $[0,\tau)$. Then the random variable $Z_{\tau}^L:= \lim_{t \uparrow \tau}  Z_t^L$ exists,
 is nonnegative, and satisfies $\{\lim_{n \uparrow \infty} \langle L\rangle_{\tau_n}  < \infty\} = \{Z_\tau^L > 0\}$ almost surely.
\end{lemma}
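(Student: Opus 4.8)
The plan is to use a localizing sequence for $L$ on the stochastic interval $[0,\tau)$ and exploit the fact that a continuous local martingale is, on each bounded quadratic-variation stretch, a time-changed Brownian motion.

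First I would fix a sequence of stopping times $(\tau_n)$ announcing $\tau$ (i.e.\ $\tau_n \uparrow \tau$ with $\tau_n < \tau$), which exists since $\tau$ is predictable, and such that additionally $L^{\tau_n}$ is a genuine continuous local martingale (indeed a true martingale after a further localization, or at least $\langle L\rangle_{\tau_n}$ is locally bounded). On $[0,\tau_n]$ the process $Z^L = \mathcal{E}(L)$ is a nonnegative continuous local martingale, hence a nonnegative continuous supermartingale; by the supermartingale convergence theorem applied along $n$, together with the fact that $Z^L_{\tau_n} = \mathcal{E}(L)_{\tau_n}$ is continuous in $n$, the limit $Z^L_\tau := \lim_{t\uparrow\tau} Z^L_t$ exists almost surely and is nonnegative. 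The key point is that a nonnegative supermartingale that hits $0$ stays at $0$, so on $\{Z^L_\tau = 0\}$ the limit genuinely exists (it equals $0$), and on $\{Z^L_\tau>0\}$ the path of $\log Z^L_t = L_t - \tfrac12\langle L\rangle_t$ converges to a finite limit.

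Next I would prove the identity $\{\lim_n \langle L\rangle_{\tau_n} < \infty\} = \{Z^L_\tau > 0\}$ up to a null set, by a double inclusion. For "$\subseteq$": on $\{\langle L\rangle_\tau < \infty\}$ (writing $\langle L\rangle_\tau := \lim_n \langle L\rangle_{\tau_n}$), the Dambis--Dubins--Schwarz time change shows $L_{\tau_n} = \beta_{\langle L\rangle_{\tau_n}}$ for a Brownian motion $\beta$ (on a suitable enlargement), and since $\langle L\rangle_{\tau_n}$ converges to a finite limit, $L_{\tau_n}$ converges to the finite value $\beta_{\langle L\rangle_\tau}$; therefore $L_t - \tfrac12\langle L\rangle_t$ converges to a finite limit and $Z^L_\tau = \exp(L_\tau - \tfrac12\langle L\rangle_\tau) > 0$. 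For "$\supseteq$": on $\{\langle L\rangle_\tau = \infty\}$, the time-changed Brownian motion $\beta$ satisfies $\liminf_{s\uparrow\infty}\beta_s = -\infty$ almost surely (by recurrence/the law of the iterated logarithm), so $\liminf_{t\uparrow\tau}(L_t - \tfrac12\langle L\rangle_t) = \liminf_{t\uparrow\tau}(\beta_{\langle L\rangle_t} - \tfrac12\langle L\rangle_t) = -\infty$; combined with the a.s.\ existence of the limit already established, this forces $\lim_{t\uparrow\tau} Z^L_t = 0$, i.e.\ $Z^L_\tau = 0$. One subtlety to handle carefully is the case $\langle L\rangle_\tau = \infty$ attained only in the limit $t\uparrow\tau$ while each $\langle L\rangle_{\tau_n}$ is finite: the time change is still valid on $[0,\langle L\rangle_{\tau_n})$ for each $n$, and letting $n\to\infty$ extends it to the full half-line, so the recurrence argument applies verbatim.

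The main obstacle I expect is the careful bookkeeping of the time change on the stochastic interval $[0,\tau)$: one must check that $\langle L\rangle$ is continuous and strictly increasing where needed (or handle its flat stretches), that the Brownian motion obtained from DDS is defined on all of $[0,\lim_n\langle L\rangle_{\tau_n})$ and extends appropriately, and that the almost-sure statements about $\liminf$ transfer correctly back through the (possibly random, possibly non-strictly-monotone) time substitution. Once these measure-theoretic details are in place, the proof is the short two-inclusion argument sketched above.
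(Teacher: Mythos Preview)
Your proposal is correct and follows essentially the same line as the paper: existence of the limit via supermartingale/downcrossing convergence, and the set identity via the Dambis--Dubins--Schwarz representation of $L$ as a time-changed Brownian motion. The only cosmetic difference is that the paper handles both inclusions at once by writing $\log Z^L_t = \langle L\rangle_t\bigl(L_t/\langle L\rangle_t - \tfrac12\bigr)$ and invoking that $L_t/\langle L\rangle_t$ has a finite limit (equal to zero on $\{\langle L\rangle_\tau=\infty\}$ by the law of large numbers for Brownian motion), whereas you split into the two cases $\langle L\rangle_\tau<\infty$ and $\langle L\rangle_\tau=\infty$ explicitly.
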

\begin{proof} 
Doob's downcrossing inequality yields that  $Z_{\tau(\omega)}^L(\omega)$ exists for almost all $\omega \in \Omega$; see the proof of Theorem~1.3.15 in \citet{KS1} with $\infty$ replaced by $\tau$ and $n$ replaced by $\tau_n$ for all $n \in \N$  for a nondecreasing sequence of stopping times $\{\tau_n\}_{n \in \N}$ with $\lim_{n \uparrow \infty} \tau_n = \tau$. Next, observe that $Z_\tau^L = 0$ if and only if $\log(Z_\tau^L) = -\infty$ and that
    \begin{align*}
        \log\left(Z_t^L\right) = \langle L \rangle_t\left(\frac{L_t}{\langle L \rangle_t} - \frac{1}{2}\right).
    \end{align*}
for all $t \in (0, \tau)$ with $\langle L \rangle_t>0$. Thus, to prove the statement it is sufficient to show that $\lim_{t \uparrow \tau} L_t/\langle L \rangle_t$ exists and is real. This, however, follows directly from an application of the Dambis-Dubins-Schwarz theorem; see also Exercise~V.1.16.3 and  Proposition~V.1.8 in \citet{RY}.
\end{proof}

Set $\mathcal{E}(L)_{\tau+t} := \mathcal{E}(L)_{\tau} := \lim_{s \uparrow \tau}  \mathcal{E}(L)_{s}$ for all $t \geq 0$ for a continuous local martingale $L$ on $[0, \tau)$. Then, for any nonnegative continuous local martingale $Z$, there exists a continuous local martingale $L$ on $[0, T_0)$, measurable with respect to the filtration generated by $Z$, such that  $Z = Z^L := \mathcal{E}(L)$, where $T_0$ denotes
the first hitting time of zero by  $Z$.
For a positive continuous local martingale $Z$, this is Proposition~VIII.1.6 in \citet{RY}.

\subsection{Change of measure for continuous local martingales}
In this subsection, we provide a generalization of Girsanov's theorem, proven in its modern version by \citet{vanschuppenWong}, to nonnegative local martingales. This generalization goes back to \citet{F1972}, who constructed a similar probability measure on the product space $\Omega \times [0,\infty]$, endowed with the predictable sigma-field, for a nonnegative supermartingale, such that its expectation can be represented as the probability of a certain event. \citet{M} observed that such a probability measure can already be constructed on certain spaces $(\Omega, \CF)$ if the supermartingale is a local martingale. We shall follow here this approach, which was then taken on by 
\citet{DS_Bessel}, \citet{PP}, \citet{FK}, \citet{Ruf_hedging}, and many others.  

We remark that most of the statements in Theorem~\ref{P change} have been proven before, for example in \citet{CFR2011}. However, for the convenience of the reader, we collect the important steps of the proof:

 \begin{thm}[Change of measure for continuous  local martingales]  \label{P change}
 	Let $\widetilde{\Omega} = C_1^\text{abs}([0,\infty),[0,\infty])$ be the set of  paths $\omega: [0, \infty) \rightarrow [0,\infty]$ with $\omega(0)=1$ that satisfy $\omega(t) = \omega(t \wedge \widetilde{T}_0(\omega) \wedge \widetilde{T}_\infty(\omega))$ for all $t \geq 0$, where $\widetilde{T}_0(\omega)$ and $\widetilde{T}_\infty(\omega)$ denote the first hitting times of $0$ and $\infty$ by $\omega$, and which are continuous on $[0, \widetilde{T}_\infty(\omega))$.
	Let $\{\widetilde{\CF}_t\}_{t \geq 0}$ denote the filtration generated by the canonical process $X$, defined by $X_t(\omega) := \omega(t)$ for all $t \geq 0$,  and set $\widetilde{\CF} = \bigvee_{t \geq 0} \widetilde{\CF}_t$. Let $\widetilde{\Prob}$ be a probability measure on $(\widetilde{\Omega}, \widetilde{\CF})$ such that $X$ is a (nonnegative) local $\widetilde{\Prob}$-martingale (starting in $1$). 
	
     Then there exists a unique probability measure $\Q$ on $(\widetilde{\Omega}, \widetilde{\CF})$ such that 
    \begin{align}   \label{E Qe}
        \E^{\Q}\left[\frac{1}{X_\rho} \left(Y\1_{\{1/X_\rho>0\}}\right)  \right] = \E^{\widetilde{\Prob}}\left[Y \1_{\{X_\rho>0\}}\right], 
    \end{align}
   where we set, for sake of notation, $\infty \cdot 0 := 0$,  for all random variables $Y$ taking values in  $[0, \infty]$ and being measurable with respect to $\widetilde{\CF}_\rho$ for some stopping time $\rho$ with $\rho \leq t$ for some $t\geq 0$.
     Furthermore,  if $X = \mathcal{E}(L)$ for some $\widetilde{\Prob}$-local martingale $L$ on $[0, \widetilde{T}_0)$ then $\widetilde{L} := L - \langle L\rangle$ is a $\Q$-local martingale  on $[0,\widetilde{T}_\infty)$ and $1/X = \mathcal{E}(-\widetilde{L})$.
\end{thm}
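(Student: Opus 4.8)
The plan is to build $\Q$ by first specifying a consistent family of measures on the stopped filtrations $\widetilde{\CF}_t$ and then invoking an extension (Carathéodory/Kolmogorov-type) theorem on the canonical path space. Concretely, for each fixed $t \geq 0$ I would define a measure $\Q_t$ on $(\widetilde{\Omega}, \widetilde{\CF}_t)$ by
\begin{align*}
    \Q_t(A) := \E^{\widetilde{\Prob}}\left[X_t \1_A \1_{\{X_t < \infty\}}\right] + \widetilde{\Prob}\left(A \cap \{X_t = \infty\}\right)
\end{align*}
for $A \in \widetilde{\CF}_t$ — though one must be a little careful about total mass, so more precisely one truncates by a localizing sequence $\{\rho_n\}$ for $X$, defines $\Q_t$ using the bounded martingale $X^{\rho_n}$, and checks that on $\widetilde{\CF}_{t \wedge \rho_n}$ these are consistent as $n$ grows; the point of the path space $\widetilde{\Omega}$, which already contains the "cemetery" value $\infty$, is that it carries all the mass that $X$ loses as a strict supermartingale. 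The first step is to verify that the family $\{\Q_t\}_{t \geq 0}$ is consistent, i.e. $\Q_s = \Q_t|_{\widetilde{\CF}_s}$ for $s \leq t$; this is exactly the (local) martingale property of $X$ under $\widetilde{\Prob}$ together with optional stopping applied to $X^{\rho_n}$. The second step is to apply the extension theorem of \citet{M} (which is precisely designed for this canonical setting) to obtain a single $\Q$ on $(\widetilde{\Omega},\widetilde{\CF})$ restricting to each $\Q_t$; uniqueness follows because the $\widetilde{\CF}_t$ generate $\widetilde{\CF}$ and a measure is determined by its values on a generating algebra.

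The third step is to derive the abstract identity \eqref{E Qe}. By a monotone-class / functional form argument it suffices to prove it for $Y = \1_A$ with $A \in \widetilde{\CF}_\rho$ and $\rho \leq t$ a stopping time. On the event $\{X_\rho = \infty\}$ one has $1/X_\rho = 0$, so the left side ignores it, matching the indicator $\1_{\{X_\rho > 0\}}$ on the right once we note $\{X_\rho = \infty\} \subseteq \{X_\rho > 0\}$ contributes $\E^{\widetilde\Prob}[\1_A\1_{\{X_\rho=\infty\}}]$ — wait, that is the subtle point, so let me restate: the correct reading is that the right-hand side should be $\E^{\widetilde\Prob}[Y\1_{\{X_\rho>0, X_\rho<\infty\}}]$ absorbed correctly by how $\Q$ was built; in the construction above the $\{X=\infty\}$ mass was transported so that under $\Q$, $1/X = 0$ there, and optional stopping of $X^{\rho_n}$ under $\widetilde\Prob$ gives $\Q(A \cap \{X_\rho \le n\}) = \E^{\widetilde\Prob}[X_\rho \1_A \1_{\{X_\rho \le n\}}]$, hence $\E^\Q[(1/X_\rho)\1_A\1_{\{1/X_\rho>0\}}] = \lim_n \E^\Q[(1/X_\rho)\1_A\1_{\{X_\rho \le n\}}] = \lim_n \widetilde\Prob(A \cap \{0 < X_\rho \le n\}) = \E^{\widetilde\Prob}[\1_A\1_{\{X_\rho>0\}}]$ by monotone convergence, using $\widetilde{T}_0 \le \widetilde T_\infty$ so that $\{X_\rho > 0\} = \{0 < X_\rho < \infty\}$ up to a $\widetilde\Prob$-null set via Lemma~\ref{L extended}. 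This is the computational heart and I expect it to be where the bookkeeping between the two cemetery states $0$ and $\infty$, and the conventions $\infty \cdot 0 := 0$, must be handled with genuine care.

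For the final clause, suppose $X = \mathcal{E}(L)$ with $L$ a $\widetilde\Prob$-local martingale on $[0,\widetilde T_0)$. Under $\Q$, the process $1/X$ is a nonnegative local martingale (this is immediate from \eqref{E Qe} with $Y$ ranging over $\widetilde{\CF}_\rho$-measurable indicators and the optional sampling characterization of local martingales, localizing along the $\{X \le n\}$-exit times, now on the interval $[0, \widetilde T_\infty)$ since $X$ hitting $\infty$ corresponds to $1/X$ hitting $0$). Hence by the extended-stochastic-exponential representation noted after Lemma~\ref{L extended}, there is a $\Q$-local martingale $M$ on $[0,\widetilde T_\infty)$, measurable w.r.t.\ the filtration generated by $1/X$ (equivalently by $X$), with $1/X = \mathcal{E}(M)$. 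It remains to identify $M = -\widetilde L = -(L - \langle L\rangle)$. On $[0,\widetilde T_0 \wedge \widetilde T_\infty)$ one simply computes, using Itô's formula on $1/X = \exp(-L + \langle L\rangle/2)$, that $\dd(1/X) = (1/X)(-\dd L + \dd\langle L \rangle)$, so that $M = -L + \langle L\rangle = -\widetilde L$ on that interval, and the quadratic variation of $\widetilde L$ under $\Q$ equals $\langle L \rangle$; the classical Girsanov computation of \citet{vanschuppenWong} makes precise that $\widetilde L = L - \langle L \rangle$ is indeed a $\Q$-local martingale there, and the extension past $\widetilde T_0$ to all of $[0,\widetilde T_\infty)$ is automatic because both sides are already defined as continuous local martingales on that stochastic interval. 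The main obstacle throughout is not any single deep estimate but the careful measure-theoretic handling of the two absorbing values and the precise localization on the correct stochastic intervals under each measure.
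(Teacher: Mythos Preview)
Your strategy is the paper's: build a consistent family of probability measures indexed by a localizing sequence and invoke the F\"ollmer--Meyer/Parthasarathy extension on the canonical space. Two points deserve tightening.

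First, your opening formula $\Q_t(A) = \E^{\widetilde{\Prob}}[X_t \1_A \1_{\{X_t<\infty\}}] + \widetilde{\Prob}(A \cap \{X_t=\infty\})$ is a false start: under $\widetilde{\Prob}$ the process $X$ is a local martingale, so $\widetilde{\Prob}(X_t=\infty)=0$ and the second term vanishes, leaving a sub-probability when $X$ is strict. You recognize this and retreat to a localizing sequence, which is exactly right; the paper simply starts there, taking $\tau_n = R_n \wedge n$ with $R_n$ the first hitting time of level $n$, so that $X_{\tau_n}$ is bounded and $\dd\Q_n = X_{\tau_n}\,\dd\widetilde{\Prob}$ on $\widetilde{\CF}_{\tau_n}$ is an honest probability. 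The key measurable fact the paper invokes (and you do not) is that $\widetilde{\CF} = \bigvee_n \widetilde{\CF}_{\tau_n}$, via Lemma~1.3.3 of Stroock--Varadhan, which is what lets the extension theorem see the whole $\sigma$-algebra.

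Second, your uniqueness argument has a gap. Knowing that the $\widetilde{\CF}_t$ generate $\widetilde{\CF}$ is not enough, because the identity \eqref{E Qe} carries the factor $\1_{\{1/X_\rho>0\}}$ and therefore says nothing about $\Q$ on subsets of $\{X_\rho=\infty\}$, which can have positive $\Q$-mass. The paper fixes this by choosing $\rho = \tau_n \wedge S_n$ (with $S_n$ the first hitting time of $1/n$), so that $X_\rho \in [1/n,n]$, both indicators are identically one, and plugging $Y = X_\rho \1_A$ into \eqref{E Qe} recovers $\Q(A) = \E^{\widetilde{\Prob}}[X_\rho \1_A]$ on $\widetilde{\CF}_{\tau_n \wedge S_n}$; since $\bigvee_n \widetilde{\CF}_{\tau_n \wedge S_n} = \widetilde{\CF}$, uniqueness follows. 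The same localization makes the Girsanov step cleaner than your It\^o-formula identification: on $\widetilde{\CF}_{\tau_n \wedge S_n}$ the measures $\widetilde{\Prob}$ and $\Q$ are equivalent, so the classical Girsanov theorem applies directly to give that $\widetilde{L}^{\tau_n \wedge S_n}$ is a $\Q$-local martingale.
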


\begin {proof}
	Let $R_n$ denote the first hitting time of level $n$ by $X$, let $S_n$ denote the first hitting time of level $1/n$ by $X$, and set $\tau_n = R_n \wedge n$ for all $n \in \N$. Then, for all $n \in \N$, define a probability measure $\Q_n$ on $(\widetilde{\Omega}, \widetilde{\CF}_{\tau_n})$ by $\dd \Q_n = X_{\tau_n} \dd \Prob|_{\widetilde{\CF}_{\tau_n}}$ and observe that the family of probability measures $\{\Q_n\}_{n \in \N}$ is consistent, that is, $\Q_{n+i}|_{\widetilde{\CF}_{\tau_n}} = \Q_n$ for all $i,n \in \N$, and that  $\widetilde{\CF} = \widetilde{\CF}_{\widetilde{T}_0 \wedge \widetilde{T}_\infty} = \bigvee_{n \in \N} \widetilde{\CF}_{\tau_n}$ by using Lemma~1.3.3 in \citet{SV_multi}.  Exactly as in \citet{M} and Section~6 in \citet{F1972} it follows that $(\widetilde{\Omega}, \widetilde{\CF}_{\tau_n})$ is standard in the sense of Definition~V.2.2 of \citet{Pa} for all $n \in \N$ and that the remaining assumptions of the Extension Theorem~V.4.1 in \citet{Pa}
are satisfied	yielding the existence of a probability measure $\Q$
on $(\widetilde{\Omega}, \widetilde{\CF})$ such that $\Q|_{\widetilde{\CF}_{\tau_n}} = \Q_n$ for all $n \in \N$.

The fact that zero is an absorbing state of $X$ implies that
\begin{align*}
	\Q\left(X_\rho = 0\right) &= \lim_{n \uparrow \infty} \Q\left(\{X_\rho = 0\} \cap \{\rho < \tau_n\}\right) = \lim_{n \uparrow \infty} \Q_n\left(\{X_\rho = 0\} \cap \{\rho < \tau_n\}\right) \\
	&= \lim_{n \uparrow \infty}  \E^{\widetilde{\Prob}}\left[X_{\tau_n}  1_{\{X_\rho = 0\} \cap \{\rho < \tau_n\}}  \right]  = 0.
\end{align*}
This yields in conjunction with monotone convergence, for any stopping time $\rho$ with $\rho \leq t$ for some $t \geq 0$ and $A \in \widetilde{\CF}_\rho$,  that
    \begin{align*}  
        \E^{\Q}\left[\frac{1}{X_\rho} 1_A  \right] &= \E^{\Q}\left[\frac{1}{X_\rho} 1_{A \cap \{\rho < \widetilde{T}_\infty\}} \right]  = \lim_{n \uparrow \infty}  \E^{\Q}\left[\frac{1}{X_\rho} 1_{A \cap \{\rho < \tau_n\} \cap \{X_\rho > 0\}}  \right]  = 
 	 \lim_{n \uparrow \infty}  \E^{\widetilde{\Prob}}\left[\frac{X_{\tau_n}}{X_\rho} 1_{A \cap \{\rho < \tau_n\} \cap \{X_\rho > 0\}}  \right]  \\
	&= \lim_{n \uparrow \infty}   \widetilde{\Prob}\left(A \cap \{\rho < \tau_n\} \cap \{X_\rho > 0\} \right)
	 = \widetilde{\Prob}\left(A \cap\{X_\rho>0\}\right),
    \end{align*}
where the third equality follows from the observation that $\Q$ is absolutely continuous with respect to $\widetilde{\Prob}$ on $\widetilde{\CF}_{ \tau_n}$ with Radon-Nikodym derivative $X_{\tau_n}$ and the fourth equality follows by taking conditional expectation. 
Now, \eqref{E Qe} follows by another application of the monotone convergence theorem. The uniqueness of $\Q$ follows from plugging $\rho = \tau_n \wedge S_n$ and $Y = X_{\rho} \1_A$ for all $A \in \widetilde{\CF}_\rho$ and $n \in \N$
into  \eqref{E Qe}  and observing that $\widetilde{\CF} = \bigvee_{n \in \N} \widetilde{\CF}_{\tau_n \wedge S_n}$, similar to above.

It remains to show that $\widetilde{L}$, as defined in the statement, is a $\Q$-local martingale on $[0, \widetilde{T}_\infty)$. Towards this end, observe that $\Q(\lim_{n \uparrow \infty} \tau_n \wedge S_n =  \widetilde{T}_\infty)=1$.
Thus, it is sufficient to prove that $\widetilde{L}^{\tau_n \wedge S_n}$ is a $\Q$-local martingale. Using  that $\Prob$ and $\Q$ are equivalent on $\widetilde{\CF}_{\tau_n \wedge S_n}$, this follows directly from Girsanov's theorem; see for example Theorem~VIII.1.4 in \citet{RY}.
\end{proof}

Note that we may omit the indicators in \eqref{E Qe} if $X$ is a strictly positive true $\widetilde{\Prob}$-martingale by Girsanov's theorem. For general $\widetilde{\Prob}$-local martingales $X$, the event that $1/X$ hits zero might have positive $\Q$-probability; however, it has zero $\widetilde{\Prob}$-probability since $X$ is a $\widetilde{\Prob}$-local martingale.  The next corollary shall be essential:
\begin{cor}  \label{C iff}
	In the setup of Theorem~\ref{P change}, $X$ is a true $\widetilde{\Prob}$-martingale if and only if $\Q(1/X_t = 0) = 0$ for all $t > 0$.
\end{cor}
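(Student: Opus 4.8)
The plan is to obtain everything from a single substitution into the master identity \eqref{E Qe}. Concretely, I would first establish that, for every $t>0$,
$$\E^{\widetilde\Prob}[X_t] = \Q\left(\frac{1}{X_t} > 0\right) = 1 - \Q\left(\frac{1}{X_t} = 0\right),$$
and then combine this with the elementary fact that a nonnegative supermartingale starting at $1$ is a true martingale if and only if its expectation stays equal to $1$.

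For the preliminary bookkeeping I would note that $X$, being a nonnegative $\widetilde\Prob$-local martingale with $X_0=1$, is a $\widetilde\Prob$-supermartingale by Fatou's lemma, so $\E^{\widetilde\Prob}[X_t]\le 1$ and in particular $X_t<\infty$ holds $\widetilde\Prob$-almost surely; likewise, by Theorem~\ref{P change} the process $1/X=\mathcal{E}(-\widetilde L)$ is a nonnegative $\Q$-local martingale with $(1/X)_0=1$, hence a $\Q$-supermartingale, so $\Q(X_t = 0) = 0$ for every $t$. These two ``no explosion'' remarks are precisely what is needed to make sense of the conventions $1/\infty=0$ and $\infty\cdot 0=0$ appearing in \eqref{E Qe}.

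The main step is to apply \eqref{E Qe} with the constant stopping time $\rho=t$ and the $\widetilde\CF_t$-measurable, $[0,\infty]$-valued random variable $Y=X_t$. The right-hand side is then $\E^{\widetilde\Prob}[X_t\1_{\{X_t>0\}}]=\E^{\widetilde\Prob}[X_t]$, the last equality because $X_t\1_{\{X_t=0\}}=0$ and $X_t<\infty$ $\widetilde\Prob$-a.s. On the left-hand side, $\Q$-almost surely on $\{1/X_t>0\}$ one has $0<X_t<\infty$ by the remark above, so $\frac{1}{X_t}\cdot X_t=1$ there, and the left-hand side collapses to $\E^{\Q}[\1_{\{1/X_t>0\}}]=\Q(1/X_t>0)$. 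This yields the displayed identity, from which $X$ being a true $\widetilde\Prob$-martingale $\Leftrightarrow$ $\E^{\widetilde\Prob}[X_t]=1$ for all $t>0$ $\Leftrightarrow$ $\Q(1/X_t=0)=0$ for all $t>0$.

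I do not expect a genuine obstacle; the only points requiring care are the handling of the extended-real conventions in \eqref{E Qe} and the two supermartingale remarks that justify them, neither of which goes beyond routine.
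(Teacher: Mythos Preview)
Your proposal is correct and follows essentially the same route as the paper: plug $\rho=t$ and $Y=X_t$ into \eqref{E Qe} to obtain $\E^{\widetilde\Prob}[X_t]=\Q(1/X_t>0)$, and conclude via the supermartingale property. Your write-up is simply more explicit about the conventions and the two ``no explosion'' facts; note in particular that $\Q(X_t=0)=0$ is already established inside the proof of Theorem~\ref{P change}, so you may also cite it directly rather than rederiving it from the $\Q$-supermartingale property of $1/X$.
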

\begin{proof}
	The statement follows by plugging $\rho = t$ and $Y = X_t$ into \eqref{E Qe}.
\end{proof}
We remark that a similar statement as in Corollary~\ref{C iff} is already proven in Section~3.7 of \citet{McKean_1969}, under additional structural assumptions on the local martingale $L$.

\begin{remark}[Construction of canonical probability space]  \label{R canonical}
	A canonical probability space $(\widetilde{\Omega}, \widetilde{\CF}, \widetilde{\Prob})$ in the sense of Theorem~\ref{P change} can always be assumed when checking whether a continuous nonnegative local martingale $Z^L$ with $Z^L_0=1$, defined on some probability space $(\Omega, \CF, \Prob)$, is a (uniformly integrable) martingale. To see this, first note that $Z^L$ is a true martingale if and only if $\E^\Prob[Z^L_T] = 1$. Then define the mapping $\Theta: \Omega \rightarrow \widetilde{\Omega}$ by $\Theta(\omega) = Z^L(\omega)$, which is always well-defined, possibly after getting rid of a nullset. To complete
this transformation, define $\widetilde{\Prob} := \Prob \circ \Theta^{-1}$. Now, observe that the canonical process on $\widetilde{\Omega}$ has the same distribution under $\widetilde{\Prob}$ as $Z^L$ has under $\Prob$. In particular, 
the canonical process (defined on $\widetilde{\Omega}$) is a uniformly integrable martingale under $\widetilde{\Prob}$ if and only if $Z^L$ (defined on ${\Omega}$) is one under $\Prob$.  \qed
\end{remark}

\subsection{Proof of Theorem~\ref{C abstract}}
	We can assume, without loss of generality, first, that our probability space is the canonical one of Theorem~\ref{P change} by Remark~\ref{R canonical}, and second, that $T=1$, as
    we can always consider the local martingale $Z^L_{tT}$ for $T<\infty$ or $Z^L_{\tan(\pi t/2)}$ for $T = \infty$.  Thus, we need to prove that $Z^L$ is a true martingale on $[0,1]$.
    By Corollary~\ref{C iff} it is sufficient to show that $\Q(H) = 0$ for the probability measure
	$\Q$ of Theorem~\ref{P change} and for
    \begin{align*}
    	H := \left\{\mathcal{E}\left(-\widetilde{L}\right)_1 = 0\right\} = \left\{\left\langle \widetilde{L} \right\rangle_1 = \infty\right\} 
   \end{align*}
   $\Q$-almost surely, where the identity follows from Lemma~\ref{L extended} and where we have set $\widetilde{L} = L -\langle L \rangle$.
    Assume the opposite, to wit, $\Q(H) > 0$.
    Then observe that the sequence of stopping times $\{\tau_i\}_{i \in \N}$ defined as\footnote{See also the Addendum, where the definition of $\tau_i$ is modified to correct for an error in the proof.}
        \begin{align*} 
        \tau_i := \inf \left\{t \geq 0: f\left(\widetilde{L}_t + \left\langle \widetilde{L} \right\rangle_t,  \left\langle \widetilde{L} \right\rangle_t\right)  \exp\left(-\widetilde{L}_t - \frac{\left\langle \widetilde{L} \right\rangle_t}{2}\right)  \geq i\right \} \wedge \frac{i-1}{i}
    \end{align*}
    satisfies
    $\lim_{i \uparrow \infty} \widetilde{L}_{\tau_i} = \widetilde{L}_1$ on the complement of the set $H$ and,
        \begin{align} \label{E RV lim}
        \lim_{i \uparrow \infty}  f\left(\widetilde{L}_{\tau_i} + \left\langle \widetilde{L} \right\rangle_{\tau_i},  \left\langle \widetilde{L} \right\rangle_{\tau_i}\right)  \exp\left(-\widetilde{L}_{\tau_i}- \frac{\left\langle \widetilde{L} \right\rangle_{\tau_i}}{2}\right)   = \infty
    \end{align}
    on $H$. This holds because the continuous $\Q$-local martingale $\widetilde{L}$ on $[0, \widetilde{T}_0)$, where $\widetilde{T}_0$ denote the first hitting time of zero by $\mathcal{E}(-\widetilde{L})$,
 can be represented as a time-changed Brownian motion; to wit, $\widetilde{L}_t = B_{\langle\widetilde{L}\rangle_t}$ for $t < \widetilde{T}_0$ and some $\Q$-Brownian motion $B$.
    Thus, we obtain that
    \begin{align*}
        \infty &= \lim_{i \uparrow \infty} \E^\Q\left[f\left(\widetilde{L}_{\tau_i} + \left\langle \widetilde{L} \right\rangle_{\tau_i}, \left \langle \widetilde{L} \right\rangle_{\tau_i}\right)  \exp\left(-\widetilde{L}_{\tau_i}- \frac{\left\langle \widetilde{L} \right\rangle_{\tau_i}}{2}\right)\right] \\
        &= \lim_{i \uparrow \infty} \E^\Prob\left[f\left(\widetilde{L}_{\tau_i} + \langle {L} \rangle_{\tau_i},  \langle {L} \rangle_{\tau_i}\right) \1_{\{Z^L_{\tau_i} > 0\}} \right]
        \leq \sup_{\tau \in \mathcal{T}} \left\{\E^\Prob\left[f(L_\tau, \langle L \rangle_\tau)  \1_{\{Z^L_\tau > 0\}}\right]\right\} 
        < \infty  
    \end{align*}
    by Fatou's inequality, \eqref{E Qe}, and the assumption. The apparent contradiction gives $\Q(H)=0$.
\qed

We remark that the random variable on the left-hand side of \eqref{E RV lim} is finite $\Q$-almost surely if $X$ is a (uniformly integrable) martingale. However, this random variable nevertheless could have infinite expectation under $\Q$. This is exactly the situation when the condition of Theorem~\ref{C abstract} fails despite $X$ being a true martingale.

\section{A further analysis of the Novikov-Kazamaki conditions}  \label{S further}
In this section, we study the condition in \eqref{E cond}, which we reformulate in Subsection~\ref{SS modified}. Then, in Subsection~\ref{SS orders}, we introduce an ordering of local martingales according to the condition in \eqref{E cond}. Finally, in Subsection~\ref{SS deterministic}, we discuss \citet{Kazamaki_1977}'s
original condition, which only involves deterministic times.

\subsection{Modified Novikov-Kazamaki conditions}  \label{SS modified}
In this subsection, we shall derive a modified version of the condition in  \eqref{E cond}.
To begin with, we obtain the following useful result, similarly to Corollary~\ref{t novikov}. This observation generalizes Proposition~5 in \citet{Kazamaki_1983} to allow for certain functions $\phi$ with linear growth and for nonnegative local martingales $Z^{aL}$; see also Remark~\ref{R linear} in the appendix:
\begin{cor}[Martingale property of $Z^{aL}$] \label{C martingality}
	Fix any $a \in \R \setminus\{0,1\}$. Then the stochastic exponential $Z^{aL}$ is a (uniformly integrable) martingale on $[0,T]$ if \eqref{E cond} holds for some continuous function $\phi$ with $\liminf_{t \uparrow \infty}\phi^+(t)/t < |a-1|/2$ or, slightly more general, with $\phi(t) = |a-1|t/2  - \widetilde{\phi}(t)$, where $\widetilde{\phi}$ denotes a continuous function with $\limsup_{t \uparrow \infty} \widetilde{\phi}(t) = \infty$.
\end{cor}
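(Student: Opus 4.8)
The plan is to deduce this from Theorem~\ref{C abstract} applied to the continuous local martingale $aL$ on $[0,T_0)$ (recall $a\neq 0$) together with a rescaled test function, rather than to $L$ itself. The role of the assumption $a\neq 1$ becomes visible immediately: trying to rewrite $S^{L,a,\phi}$ in the form $S^{aL,a',\phi'}$, so as to quote Corollary~\ref{t novikov} directly, would force $a'=1$, which is excluded there; one therefore really does need the abstract Theorem~\ref{C abstract}.

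Concretely, I would apply the theorem with the function $g:\R\times[0,\infty)\to[0,\infty)$ given by
\[
  g(x,y) := \exp\!\left(x + \left(\frac{1}{2}-a\right)\frac{y}{a^2} - |a-1|\,\phi\!\left(\frac{y}{a^2}\right)\right),
\]
which is continuous because $\phi$ is. Since $\langle aL\rangle = a^2\langle L\rangle$, a direct substitution gives $g\big((aL)_t,\langle aL\rangle_t\big) = \exp\!\big(aL_t + (1/2-a)\langle L\rangle_t - |a-1|\phi(\langle L\rangle_t)\big)$. Moreover, since $a\neq 0$, the processes $Z^{aL}$ and $Z^L$ have the same zero set by Lemma~\ref{L extended}, so that $\{Z^{aL}_\tau>0\} = \{Z^L_\tau>0\}$ for every stopping time $\tau$; hence $g\big((aL)_\tau,\langle aL\rangle_\tau\big)\1_{\{Z^{aL}_\tau>0\}} = S^{L,a,\phi}_\tau$, and the integrability hypothesis~\eqref{E cond0} of Theorem~\ref{C abstract}, for the pair $(aL,g)$, is precisely the assumed~\eqref{E cond}.

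It remains to verify the $\limsup$ condition on $g$. For a Brownian motion $B$, collecting the coefficient of $t$, which equals $\frac{1}{2} + \frac{1/2-a}{a^2} = \frac{(a-1)^2}{2a^2}$, one finds
\begin{align*}
  g\big(B_t+t,t\big)\exp\!\left(-B_t - \frac{t}{2}\right)
  &= \exp\!\left(\frac{(a-1)^2}{2a^2}\,t - |a-1|\,\phi\!\left(\frac{t}{a^2}\right)\right) \\
  &= \exp\!\left(|a-1|\left(\frac{|a-1|}{2}\,s - \phi(s)\right)\right), \qquad s := \frac{t}{a^2}.
\end{align*}
Since $a\neq 1$, the required almost sure identity $\limsup_{t\uparrow\infty} g(B_t+t,t)\exp(-B_t-t/2) = \infty$ is therefore equivalent to the purely deterministic statement $\limsup_{s\uparrow\infty}\big(|a-1|s/2 - \phi(s)\big) = \infty$. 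Under the hypothesis $\phi(t) = |a-1|t/2 - \widetilde{\phi}(t)$ this is immediate, since the bracket equals $\widetilde{\phi}(s)$ and $\limsup_{s\uparrow\infty}\widetilde{\phi}(s) = \infty$ by assumption; and the case $\liminf_{t\uparrow\infty}\phi^+(t)/t < |a-1|/2$ reduces to the previous one upon setting $\widetilde{\phi}(t) := |a-1|t/2 - \phi(t)$, because along any sequence $s_k\uparrow\infty$ with $\phi^+(s_k)/s_k < |a-1|/2 - \varepsilon$ one has $\widetilde{\phi}(s_k) \geq s_k\big(|a-1|/2 - \phi^+(s_k)/s_k\big) > \varepsilon s_k \to \infty$. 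An application of Theorem~\ref{C abstract} to $aL$ and $g$ now yields that $Z^{aL}$ is a (uniformly integrable) martingale on $[0,T]$.

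The argument is, in effect, a rescaling of the proof of Corollary~\ref{t novikov}, and no step is genuinely hard. The only points that require care are the two already flagged: one must route the argument through Theorem~\ref{C abstract} rather than Corollary~\ref{t novikov} (as the relevant second parameter would be $1$), and one must track the factor $a^2$ in $\langle aL\rangle = a^2\langle L\rangle$ consistently through both the identification with $S^{L,a,\phi}$ and the $\limsup$ computation, together with the observation that $Z^{aL}$ and $Z^L$ share the same zero set.
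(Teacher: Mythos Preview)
Your proof is correct and follows essentially the same route as the paper's: define the same auxiliary function (called $f$ there, $g$ here), identify $S^{L,a,\phi}$ with $g(aL,\langle aL\rangle)\1_{\{Z^{aL}>0\}}$, compute the $\limsup$ exactly as you do, and invoke Theorem~\ref{C abstract} applied to $aL$. You have in fact been slightly more explicit than the paper in two places---you justify the equality $\{Z^{aL}_\tau>0\}=\{Z^L_\tau>0\}$ via Lemma~\ref{L extended}, and you spell out why the $\liminf$ hypothesis on $\phi^+$ forces the required deterministic $\limsup$ to be infinite---both of which the paper leaves implicit.
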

\begin{proof}
	Similar to the proof of Corollary~\ref{t novikov}, define $f: \R \times [0,\infty) \rightarrow [0,\infty)$ by $$f(x,y) = \exp\left(x + \left(\frac{1}{2}-a\right)\frac{y}{a^2} - |a-1| \phi\left(\frac{y}{a^2}\right)\right)$$ for all $(x,y) \in \R \times [0,\infty)$ and let $B$ denote some Brownian motion. Then note that
$S^{L,a,\phi} = f(aL, a^2\langle L \rangle) \1_{\{Z^{aL} > 0\}}$ and that
        \begin{align*} 
            \limsup_{t \uparrow \infty} f(B_t+t,t) \cdot \exp\left(-B_t - \frac{t}{2}\right) = \limsup_{t \uparrow \infty} \exp\left(|a-1| \left(\frac{|a-1|}{2} \cdot \frac{t}{a^2} - \phi\left(\frac{t}{a^2}\right) \right) \right) = \infty.
        \end{align*}
	Thus, an application of Theorem~\ref{C abstract} yields the statement.
\end{proof}

We directly obtain the following modified version of the Novikov-Kazamaki conditions:
\begin{cor}[Modified Kazamaki's criterion]  \label{c novikov2}
	The stochastic exponential $Z^L$ is a (uniformly integrable) martingale on $[0,T]$ if, for some $a \in \R \setminus\{0,1\}$ and some continuous function $\phi$ as in Corollary~\ref{C martingality}, we have that
    \begin{align} \label{E cond mod}
        \sup_{\tau \in \mathcal{T}} \left\{\E\left[S^{L/a,a,\phi}_{\tau} \right]\right\}  < \infty
    \end{align}
    with $\mathcal{T}$ as in Corollary~\ref{t novikov}.
\end{cor}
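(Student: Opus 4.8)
The plan is to deduce Corollary~\ref{c novikov2} directly from Corollary~\ref{C martingality} by a simple change of the driving local martingale. First I would set $M := L/a$, which is again a continuous local martingale on $[0, T_0^M)$ with $M_0 = 0$, where $T_0^M$ is the first hitting time of zero by $Z^M = \mathcal{E}(M)$. Then $aM = L$, so $Z^{aM} = \mathcal{E}(aM) = \mathcal{E}(L) = Z^L$, and the hypothesis \eqref{E cond mod}, namely $\sup_{\tau \in \mathcal{T}} \E[S^{L/a,a,\phi}_\tau] < \infty$, is literally the statement $\sup_{\tau \in \mathcal{T}} \E[S^{M,a,\phi}_\tau] < \infty$, i.e.\ condition~\eqref{E cond} applied to $M$ in place of $L$.

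Next I would invoke Corollary~\ref{C martingality} with $M$ in the role of $L$: since $a \in \R \setminus \{0,1\}$ and $\phi$ is a continuous function of the type allowed there (either $\liminf_{t \uparrow \infty} \phi^+(t)/t < |a-1|/2$, or $\phi(t) = |a-1|t/2 - \widetilde{\phi}(t)$ with $\limsup_{t \uparrow \infty}\widetilde{\phi}(t) = \infty$), Corollary~\ref{C martingality} yields that $Z^{aM}$ is a (uniformly integrable) martingale on $[0,T]$. Since $Z^{aM} = Z^L$, this is exactly the assertion of Corollary~\ref{c novikov2}, and the proof is complete.

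There is essentially no obstacle here; the only point requiring a word of care is the bookkeeping of quadratic variations under the rescaling, namely $\langle M \rangle = \langle L \rangle / a^2$, so that $S^{M,a,\phi} = \exp(aM + (1/2 - a)\langle M\rangle - |a-1|\phi(\langle M\rangle)) \1_{\{Z^M > 0\}}$ does indeed coincide with the process $S^{L/a,a,\phi}$ as written in \eqref{E cond mod}, and that $\{Z^M > 0\} = \{Z^{aM} > 0\} = \{Z^L > 0\}$ as sets (both equal $\{\langle L\rangle < \infty\}$ by Lemma~\ref{L extended}). Once this identification is made explicit, the statement follows immediately, so the entire proof is a one-line reduction.

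\begin{proof}
	Apply Corollary~\ref{C martingality} to the continuous local martingale $L/a$ in place of $L$, noting that $Z^{a \cdot (L/a)} = Z^L$ and that $S^{L/a, a, \phi}$ is precisely the process $S^{M, a, \phi}$ associated with $M = L/a$, so that \eqref{E cond mod} is \eqref{E cond} for $M$.
\end{proof}
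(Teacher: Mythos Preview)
Your proof is correct and follows exactly the paper's own argument, which is the single line ``The statement follows from Corollary~\ref{C martingality} after replacing $L$ by $L/a$.'' Your additional bookkeeping remarks (that $\langle M\rangle = \langle L\rangle/a^2$ and $\{Z^M>0\}=\{Z^L>0\}$) are correct and simply make explicit what the paper leaves implicit.
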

\begin{proof}
    The statement follows from Corollary~\ref{C martingality} after replacing $L$ by $L/a$.
\end{proof}
For example, using $a = 1/2$ and $\phi(x) = d x$ for some $d \in [0, 1/4)$, \eqref{E cond mod} simplifies to
    \begin{align}  \label{E cond mod2}
    \sup_{\tau \in \mathcal{T}} \left\{\E\left[\exp(L_\tau - 2 d \langle L \rangle_\tau) \1_{\{Z^L_\tau>0\}} \right]\right\} < \infty.
    \end{align}
 For illustration, consider the case of $L$ being a Brownian motion, stopped as soon as it hits $c + 2 d t$ for some constant $c>0$. Obviously, the condition in \eqref{E cond mod2} then holds; therefore, we have that $Z^L$ is a uniformly integrable martingale, a well-known result; see \citet{Shepp_1969}, \citet{Shiryaev_Vostrikova}, \citet{ChernyShiryaev}, and Examples~1 and 2 in \citet{Kazamaki_1983} for more general statements in this context. Another application of the last corollary directly yields the following observation:
\begin{cor}[Another criterion]  
	The stochastic exponential $Z^L$ is a (uniformly integrable) martingale on $[0,T]$ if for some strictly positive continuous function $\psi: [0, \infty] \rightarrow (0, \infty]$ with $\limsup_{t \uparrow \infty} \psi(t) = \infty$,  we have that
    \begin{align*} 
        \sup_{\tau \in \mathcal{T}} \left\{\E\left[Z^L_{\tau} \psi(\langle L\rangle_\tau) \right]\right\}  < \infty
    \end{align*}
    with $0 \cdot \infty := 0$.
\end{cor}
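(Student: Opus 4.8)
The plan is to read this statement off Corollary~\ref{c novikov2} (the modified Kazamaki criterion), equivalently off Theorem~\ref{C abstract} itself; the only real work is to exhibit the right pair $(a,\phi)$, after a harmless normalization of $\psi$.

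First I would reduce to the case in which $\psi$ is finite on $[0,\infty)$. Put $g(t):=1+t$ and replace $\psi$ by $\psi\wedge g$. This function is again continuous and strictly positive, it is finite on $[0,\infty)$, it is dominated by $\psi$ — so the hypothesis $\sup_{\tau\in\mathcal{T}}\E[Z^L_\tau\psi(\langle L\rangle_\tau)]<\infty$ is inherited (with $0\cdot\infty:=0$ the two integrands even coincide on $\{Z^L_\tau=0\}$) — and it still satisfies $\limsup_{t\uparrow\infty}(\psi\wedge g)(t)=\infty$, because along any sequence $t_n\uparrow\infty$ with $\psi(t_n)\to\infty$ both $\psi(t_n)$ and $g(t_n)$ tend to infinity. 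So we may assume $\psi$ is finite on $[0,\infty)$.

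Next I would set $a:=\tfrac12$ and define the continuous function $\phi:[0,\infty)\to\R$ by $\phi(t):=\tfrac{t}{4}-2\log\psi(\tfrac{t}{4})$. A one-line exponent match — using $\langle 2L\rangle=4\langle L\rangle$ and the coincidence $\{Z^{2L}_\tau>0\}=\{Z^L_\tau>0\}$ (both equal $\{\langle L\rangle_\tau<\infty\}$, since $Z^{2L}=(Z^L)^2\exp(-\langle L\rangle)$) — gives
\[
	S^{L/a,a,\phi}_\tau \;=\; S^{2L,1/2,\phi}_\tau \;=\; \exp\left(L_\tau-\frac{\langle L\rangle_\tau}{2}\right)\psi(\langle L\rangle_\tau)\1_{\{Z^L_\tau>0\}} \;=\; Z^L_\tau\,\psi(\langle L\rangle_\tau),
\]
the last equality again by the convention $0\cdot\infty:=0$. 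Moreover $\phi$ has the form $\phi(t)=|a-1|t/2-\widetilde{\phi}(t)$ with $\widetilde{\phi}(t):=2\log\psi(t/4)$ continuous and $\limsup_{t\uparrow\infty}\widetilde{\phi}(t)=\infty$ (because $\limsup_{s\uparrow\infty}\psi(s)=\infty$), so $\phi$ lies in the admissible class of Corollary~\ref{C martingality}. Hence the hypothesis is exactly condition~\eqref{E cond mod} and Corollary~\ref{c novikov2} yields that $Z^L$ is a uniformly integrable martingale on $[0,T]$.

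The only step that is not purely mechanical is the passage to a finite-valued $\psi$; everything after that is bookkeeping with the convention $0\cdot\infty:=0$ and with the identification of the events $\{Z^{aL}_\tau>0\}$ and $\{Z^L_\tau>0\}$. Alternatively, one can bypass Corollary~\ref{c novikov2} and apply Theorem~\ref{C abstract} directly with $f(x,y):=e^{x-y/2}\psi(y)$ (finite, continuous, nonnegative after the normalization): then $f(B_t+t,t)\exp(-B_t-\tfrac{t}{2})=\psi(t)$, so the required $\limsup$ condition becomes $\limsup_{t\uparrow\infty}\psi(t)=\infty$ and \eqref{E cond0} becomes the assumed bound verbatim.
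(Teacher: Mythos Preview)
Your proof is correct and follows essentially the same route as the paper: both reduce the statement to Corollary~\ref{c novikov2} by choosing an appropriate pair $(a,\phi)$ so that $S^{L/a,a,\phi}_\tau = Z^L_\tau\,\psi(\langle L\rangle_\tau)$. The only differences are cosmetic --- the paper picks $a=2$ with $\phi(t)=t/2-\log\psi(4t)$, you pick $a=1/2$ with $\phi(t)=t/4-2\log\psi(t/4)$ --- and you add a preliminary truncation $\psi\mapsto\psi\wedge(1+t)$ to ensure $\phi$ is real-valued, a point the paper glosses over. Your final remark that one can bypass Corollary~\ref{c novikov2} and plug $f(x,y)=e^{x-y/2}\psi(y)$ directly into Theorem~\ref{C abstract} is a nice observation and arguably the cleanest way to see the result.
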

\begin{proof}
    The statement follows from Corollary~\ref{c novikov2} with $a=2$ and $\phi(t) = t/2  - \log(\psi(4t))$.
\end{proof}

Corollary~\ref{C martingality} also yields the following equivalent formulation of the condition in \eqref{E cond}:
\begin{cor}[Submartingality of $S^{L,a,0}$]  \label{C subm}
    The condition in \eqref{E cond} holds for $\phi \equiv 0$ if and only if $S^{L,a,0}$ is a $\Prob$-submartingale on $[0,T]$.
\end{cor}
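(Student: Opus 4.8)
The plan is to prove the two implications of the equivalence separately; the ``if'' part is routine, and the ``only if'' part carries the content. Suppose first that $S^{L,a,0}$ is a $\Prob$-submartingale on $[0,T]$. Then its value at $T$ (the closing variable, when $T=\infty$) is integrable, and since every $\tau\in\mathcal{T}$ is a bounded stopping time with $\tau<T$, Doob's optional sampling theorem yields $\E[S^{L,a,0}_\tau]\le\E[S^{L,a,0}_T]$; taking the supremum over $\tau\in\mathcal{T}$ gives exactly \eqref{E cond} with $\phi\equiv 0$.

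For the converse, assume \eqref{E cond} holds for $\phi\equiv0$ with some $a\in\R\setminus\{1\}$, and suppose for the moment that also $a\neq 0$. The whole argument rests on the identity
\begin{align*}
  S^{L,a,0}_t \;=\; Z^{aL}_t\,\exp\!\left(\frac{(a-1)^2}{2}\,\langle L\rangle_t\right)\1_{\{Z^L_t>0\}},
\end{align*}
obtained by rewriting the exponent of $Z^{aL}=\exp(aL-a^2\langle L\rangle/2)$, together with the fact that $Z^{aL}$ is a uniformly integrable martingale on $[0,T]$---this is Corollary~\ref{C martingality} applied with $\phi\equiv0$, which is admissible since $\liminf_{t\uparrow\infty}\phi^+(t)/t=0<|a-1|/2$. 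Hence $Z^{aL}$ is closed by an integrable terminal value, and I may define $\Q^{aL}$ on $\CF_T$ by $\dd\Q^{aL}=Z^{aL}_T\,\dd\Prob$, so that $\E^{\Q^{aL}}[Y]=\E^\Prob[Z^{aL}_s Y]$ for $\CF_s$-measurable $Y\ge 0$ and $s\le T$, while $\Q^{aL}(Z^{aL}_s=0)=\E^\Prob[Z^{aL}_s\1_{\{Z^{aL}_s=0\}}]=0$; since $a\neq0$, Lemma~\ref{L extended} gives $\{Z^{aL}_s>0\}=\{\langle L\rangle_s<\infty\}=\{Z^L_s>0\}$, so under $\Q^{aL}$ the indicator above is almost surely $1$ and $\langle L\rangle_s$ is almost surely finite.

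Combining these observations, for $0\le t\le s<T$ and $A\in\CF_t$ one gets
\begin{align*}
  \E^\Prob\!\left[S^{L,a,0}_s\1_A\right]
   &= \E^{\Q^{aL}}\!\left[\exp\!\left(\tfrac{(a-1)^2}{2}\langle L\rangle_s\right)\1_A\right] \\
   &\ge \E^{\Q^{aL}}\!\left[\exp\!\left(\tfrac{(a-1)^2}{2}\langle L\rangle_t\right)\1_A\right]
      = \E^\Prob\!\left[S^{L,a,0}_t\1_A\right],
\end{align*}
the middle step being the pathwise monotonicity of $\langle L\rangle$ under $\Q^{aL}$ and the outer steps undoing the change of measure; integrability of $S^{L,a,0}_t$ for deterministic $t<T$ is part of \eqref{E cond}, since such $t$ lie in $\mathcal{T}$. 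This is the submartingale property on $[0,T)$, and I would extend it to the endpoint by letting $s\uparrow T$: by monotone convergence $\E^{\Q^{aL}}[\exp(\tfrac{(a-1)^2}{2}\langle L\rangle_s)]$ increases to $\E^{\Q^{aL}}[\exp(\tfrac{(a-1)^2}{2}\langle L\rangle_T)]$, and this limit equals $\lim_{s\uparrow T}\E^\Prob[S^{L,a,0}_s]\le\sup_{\tau\in\mathcal{T}}\E^\Prob[S^{L,a,0}_\tau]<\infty$; therefore $\{\exp(\tfrac{(a-1)^2}{2}\langle L\rangle_s)\}_{s<T}$ is dominated in $L^1(\Q^{aL})$, so the displayed inequality survives the limit, yielding integrability of $S^{L,a,0}_T$ and the submartingale inequality up to $T$ (using Lemma~\ref{L extended} to identify $\lim_{s\uparrow T}S^{L,a,0}_s$ with $S^{L,a,0}_T$, these agreeing $\Q^{aL}$-a.s.\ because $\Q^{aL}(Z^{aL}_T=0)=0$).

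It remains to dispose of $a=0$: now $Z^{aL}\equiv 1$, the change of measure is trivial and the indicator can no longer be absorbed, but \eqref{E cond} reads $\sup_{\tau\in\mathcal{T}}\E[\exp(\langle L\rangle_\tau/2)\1_{\{Z^L_\tau>0\}}]<\infty$, and Fatou's lemma forces $\langle L\rangle$ to be $\Prob$-a.s.\ finite on $[0,T]$ (otherwise $\E[\exp(\langle L\rangle_{\tau_n}/2)\1_{\{Z^L_{\tau_n}>0\}}]\to\infty$ along suitable $\tau_n\in\mathcal{T}$ increasing to $T_0\wedge T$) and $\exp(\langle L\rangle_T/2)$ to be integrable; then $S^{L,0,0}$ coincides on $[0,T]$ with the pathwise non-decreasing, adapted, integrable process $\exp(\langle L\rangle_\cdot/2)$, which is manifestly a submartingale. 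I expect the main obstacle to be precisely the passage to the terminal time $T$ in the case $a\neq0$: an Itô expansion readily shows that $\exp(aL+(\tfrac12-a)\langle L\rangle)$ has the non-negative drift $\tfrac{(a-1)^2}{2}\exp(aL+(\tfrac12-a)\langle L\rangle)\,\dd\langle L\rangle$ on $[0,T_0)$, so $S^{L,a,0}$ is a local submartingale there, but promoting this to a genuine submartingale across $T_0$ and up to $T$ uses \emph{only} the scalar bound in \eqref{E cond}, and it is the change of measure to $\Q^{aL}$ that converts that bound into the uniform integrability needed to close the argument.
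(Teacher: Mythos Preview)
Your argument is correct and follows essentially the same route as the paper: the factorization
\[
  S^{L,a,0}=Z^{aL}\exp\!\Bigl(\tfrac{(a-1)^2}{2}\langle L\rangle\Bigr)\1_{\{Z^{aL}>0\}},
\]
Corollary~\ref{C martingality} to make $Z^{aL}$ a uniformly integrable martingale, a change of measure to $\Q^{aL}$, and then the pathwise monotonicity of $\langle L\rangle$. Your treatment is in fact more careful than the paper's compressed proof in two respects: you separate out the case $a=0$ (where Corollary~\ref{C martingality} does not apply directly, but $Z^{aL}\equiv 1$ and the argument becomes a direct Fatou/monotone-convergence check that $\langle L\rangle_T<\infty$ a.s.), and you spell out the passage to the terminal time $T$ via monotone convergence under $\Q^{aL}$. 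The closing commentary about the It\^o drift of $S^{L,a,0}$ is not needed for the proof but correctly identifies why the change of measure is the efficient device here.
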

\begin{proof}
    First, observe that the submartingality of $S^{L,a,0}$ on $[0,T]$ implies \eqref{E cond} directly. For the reverse direction, note  that $Z^{aL}$ is a true $\Prob$-martingale by Corollary~\ref{C martingality} and generates a new probability measure $\Q^a$ via Girsanov's theorem. This observation and the fact that
        \begin{align}  \label{E alt representation}
        S^{L,a,0}
          &= Z^{aL}\cdot \exp\left( \frac{1}{2}\left(a - 1\right)^2 \langle L\rangle\right) \1_{\{Z^{aL}>0\}}
    \end{align}
 yield that
     \begin{align*}
        \E^\Prob\left[S^{L,a,0}_T\right] = \lim_{t \uparrow T} \E^{\Q^a}\left[\exp\left( \frac{1}{2}\left(a - 1\right)^2 \langle L\rangle_t\right)\right] \leq \sup_{\tau \in \mathcal{T}} \left\{\E^\Prob\left[S^{L,a,\phi}_{\tau} \right]\right\}  < \infty,
    \end{align*}
    and similarly that $S^{L,a,0}$ is a $\Prob$-submartingale.
\end{proof}

\subsection{Novikov-Kazamaki orders}   \label{SS orders}
In the following, we classify the local martingales $L$ that satisfy the condition in \eqref{E cond}:
\begin{defin}[Local martingales of (Novikov-Kazamaki) order $a$]
We call a local martingale $L$ a local martingale of (Novikov-Kazamaki) order $a$ for some $a \in \R$ with respect to some measurable function $\phi$ if \eqref{E cond}  is satisfied for this choice of $a$ and $\phi$.
We denote by $\mathcal{NK}^\phi(a)$ the class of all local martingales of order $a$ with respect to $\phi$. \qed
\end{defin}

It is clear that $\mathcal{NK}^\phi(a)$ contains all constant local martingales $L \equiv \text{const}$; thus, $\mathcal{NK}^\phi(a) \neq \emptyset$. Furthermore, if $\phi$ is bounded from below, we have that $L \in \mathcal{NK}^\phi(1)$ for any local martingale $L$.
 Since Novikov's condition implies Kazamaki's criterion, we further have $\mathcal{NK}^0(0) \subset \mathcal{NK}^0(1/2) \subset \mathcal{NK}^0(1)$. The
next corollary generalizes this observation:
\begin{cor}[Novikov-Kazamaki orders]\label{C orders}
    For $a < b < 1 < c < d$ and for any continuous lower function $\phi$ we have
    \begin{align*}
        \mathcal{NK}^\phi(a) \subset \mathcal{NK}^\phi(b) \text{ and }
        \mathcal{NK}^\phi(c) \supset \mathcal{NK}^\phi(d),
    \end{align*}
    where all inclusions are strict if $\phi \equiv 0$.
\end{cor}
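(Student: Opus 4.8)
The plan is to prove both chains of inclusions by exhibiting, for each pair, an explicit pointwise domination of the relevant integrands, and then to handle the strictness claims separately by constructing a concrete local martingale $L$ (essentially a time-changed/stopped Brownian motion) that lies in the larger class but not in the smaller one. For the inclusion $\mathcal{NK}^\phi(a) \subset \mathcal{NK}^\phi(b)$ with $a < b < 1$, I would use the representation $S^{L,a,\phi} = \exp(aL + (\tfrac12 - a)\langle L\rangle - |a-1|\phi(\langle L\rangle))\1_{\{Z^L>0\}}$ and write, on the event $\{Z^L_\tau>0\}$,
\begin{align*}
S^{L,b,\phi}_\tau = S^{L,a,\phi}_\tau \cdot \exp\left((b-a)L_\tau - (b-a)\langle L\rangle_\tau + (|a-1|-|b-1|)\phi(\langle L\rangle_\tau)\right).
\end{align*}
Since $a<b<1$ we have $|a-1|-|b-1| = (1-a)-(1-b) = b-a>0$, so the last exponential factor equals $\exp\big((b-a)(L_\tau - \langle L\rangle_\tau + \phi(\langle L\rangle_\tau))\big)$. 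This is not bounded, so a naive pointwise bound fails; instead I would invoke Corollary~\ref{C martingality} (or Corollary~\ref{C subm}) to turn the $\mathcal{NK}^\phi(a)$ hypothesis into the true-martingale property of $Z^{aL}$, change to the measure $\Q^a$ under which $\widehat L := L - \langle L\rangle$ (suitably interpreted) is a local martingale, and express $\sup_\tau \E^\Prob[S^{L,b,\phi}_\tau]$ as a supremum of $\Q^a$-expectations of $\exp$ of a quantity controlled by the lower-function property of $\phi$ — exactly the mechanism already used in the proof of Corollary~\ref{C subm}. The symmetric case $1<c<d$ is handled the same way after replacing the roles, using $|d-1|-|c-1| = (d-1)-(c-1) = d-c>0$ and the fact that for orders above $1$ one works with $Z^{cL}$ and the reciprocal change of measure.

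Alternatively, and more in the spirit of the paper, I would simply reduce everything back to Theorem~\ref{C abstract} via the function-$f$ device: to show $L \in \mathcal{NK}^\phi(a)$ implies $L \in \mathcal{NK}^\phi(b)$, it suffices to check that the abstract growth hypothesis
\begin{align*}
\limsup_{t\uparrow\infty} f_b(B_t+t,t)\exp\left(-B_t-\frac t2\right) = \infty
\end{align*}
holds for $f_b(x,y) = \exp(bx + (\tfrac12-b)y - |b-1|\phi(y))$, which reduces (as in the proof of Corollary~\ref{t novikov}) to $\limsup_t \exp((b-1)B_t - |b-1|\phi(t)) = \limsup_t \exp(-(1-b)(B_t + \phi(t)))$ — wait, sign check: $(b-1)B_t = -(1-b)B_t$, so this is $\exp((1-b)(-B_t) - (1-b)\phi(t)) \cdot$, and since $-B_t$ is again a Brownian motion and $\phi$ is a lower function, $\limsup_t(-B_t - \phi(t)) = \infty$, giving the claim. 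But this only shows the \emph{hypothesis of the theorem propagates}, not the set inclusion; the actual inclusion still needs the domination argument above, so I expect to present the measure-change argument as the main line and keep the $f$-reduction only as motivation.

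For the strictness statements when $\phi\equiv 0$, the main obstacle — and the part requiring genuine construction rather than bookkeeping — is producing a witness. I would take $L$ to be Brownian motion run on a random time interval, or a Bessel-type process as hinted in the abstract, stopped appropriately so that $\langle L\rangle$ is (up to constants) a deterministic function of the stopping, and then compute $\E[S^{L,b,0}_\tau]$ and $\E[S^{L,a,0}_\tau]$ more or less explicitly using Gaussian tail estimates (the ``lower function'' machinery of the Appendix). Concretely, for $a<b<1$ one wants $\sup_\tau\E[S^{L,b,0}_\tau]<\infty$ but $\sup_\tau\E[S^{L,a,0}_\tau]=\infty$; writing $S^{L,c,0}_\tau = Z^L_\tau \exp(\tfrac12(c-1)\langle L\rangle_\tau)\cdot(\text{martingale-measure reweighting})$ via \eqref{E alt representation}, the borderline behaviour is governed by whether $\E^{\Q^c}[\exp(\tfrac12(c-1)^2\langle L\rangle)]$ is finite, and since $c\mapsto (c-1)^2$ is strictly decreasing on $(-\infty,1)$ one can tune the law of $\langle L\rangle$ (e.g.\ an explosion with a precisely calibrated exponential tail) to separate any two given orders. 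I would carry out: (i) the four inclusions via the domination/change-of-measure argument; (ii) the Bessel/Brownian construction for strictness below $1$; (iii) the mirror construction for strictness above $1$ using the reciprocal local martingale; and I expect step (ii)--(iii), the explicit tail calibration, to be where the real work lies.
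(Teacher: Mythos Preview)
Your proposal for the inclusions has the right overall shape --- a measure change followed by a comparison of the two exponentials --- but it misses the one step that actually closes the argument, and the measure you pick is not the one that makes that step available.

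The paper does \emph{not} change measure via $Z^{aL}$. It uses $Z^L$ itself: since $L \in \mathcal{NK}^\phi(e)$ with $e\neq 1$ and $\phi$ a lower function, Corollary~\ref{t novikov} already gives that $Z^L$ is a uniformly integrable martingale, so one may set $\dd\Q = Z^L_T\,\dd\Prob$. Under $\Q$, with $\widetilde L = L-\langle L\rangle$, one has the exact power identity
\[
\frac{S^{L,f,\phi}_\tau}{Z^L_\tau}
= \exp\!\bigl((f-1)\widetilde L_\tau - |f-1|\phi(\langle L\rangle_\tau)\bigr)
= \Bigl(\exp\!\bigl((e-1)\widetilde L_\tau - |e-1|\phi(\langle L\rangle_\tau)\bigr)\Bigr)^{\frac{f-1}{e-1}},
\]
and since $(f-1)/(e-1)\in(0,1)$, Jensen's inequality yields $\E^\Prob[S^{L,f,\phi}_\tau]\leq (\E^\Prob[S^{L,e,\phi}_\tau])^{(f-1)/(e-1)}$, uniformly over $\tau\in\mathcal T$. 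This Jensen step is the entire content of the inclusion, and it does not appear in your proposal. Your measure $\Q^a$ (via $Z^{aL}$) makes $L-a\langle L\rangle$, not $L-\langle L\rangle$, a local martingale; the resulting exponent in $S^{L,b,\phi}/Z^{aL}$ then carries a nonvanishing $\langle L\rangle$--term (with coefficient $\tfrac12(a-1)(2b-a-1)$), which destroys the power relation and blocks the argument. The phrase ``controlled by the lower-function property of $\phi$'' is not a substitute for Jensen, and the mechanism in Corollary~\ref{C subm} does something different (it only rewrites $S^{L,a,0}$, not $S^{L,b,\phi}$, under $\Q^a$).

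For strictness with $\phi\equiv 0$, the paper does not calibrate Brownian or Bessel tails; it gives an explicit Ornstein--Uhlenbeck construction (Example~\ref{Ex strictness}) in which, under the $\Q$ generated by $Z^{\widehat L^{(a)}}$, one has $\widetilde L^{(a)}_t=\tfrac{1}{2(a-1)}((\widetilde B_t)^2-t)$ for a $\Q$--Brownian motion $\widetilde B$, so that $\E^\Prob[S^{\widehat L^{(a)},b,0}_1]$ reduces to a Gaussian moment that is finite iff $(b-1)/(a-1)<1$. Your sketch --- infer separation from the monotonicity of $c\mapsto (c-1)^2$ in $\E^{\Q^c}[\exp(\tfrac12(c-1)^2\langle L\rangle)]$ --- overlooks that the measure $\Q^c$ itself varies with $c$, so that monotonicity alone does not separate the classes; a concrete construction is still required, and the one you outline is not carried out.
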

\begin{proof}
	Fix $e \in \R \setminus \{1\}$ and $f \in (e,1)$ or $f \in (1,e)$ depending on the sign of $e-1$ and $L \in \mathcal{NK}^\phi(e)$. Then $Z^L$ is a (uniformly integrable) martingale by Corollary~\ref{t novikov} and defines a new probability measure $\Q$ by $\dd \Q = Z^L_T \dd \Prob$. An application of Jensen's inequality yields
    \begin{align*}
        \E^\Prob\left[S^{L,f,\phi}_{\tau}\right]
        &= \E^\Q\left[\exp\left((f-1) \widetilde{L}_\tau - |f-1| \phi(\langle L \rangle_\tau) \right)   \right]
         = \E^\Q\left[\exp\left((e-1) \widetilde{L}_\tau - |e-1| \phi(\langle L \rangle_\tau) \right)^{\frac{f-1}{e-1}}   \right]\\
       &\leq \left(\E^\Prob\left[S^{L,e,\phi}_{\tau}\right]\right)^{\frac{f-1}{e-1}}
    \end{align*}
	with $\widetilde{L} = L - \langle L \rangle$, for all $\tau \in \mathcal{T}$. This yields the asserted inclusions. 
    The strictness of the inclusions follows from Example~\ref{Ex strictness}.
\end{proof}
The last result is, apart from the claim of the strictness of the inclusions, Proposition~1 in \citet{Kazamaki_1983}.

\subsection{Kazamaki's criterion with deterministic times}   \label{SS deterministic}
In his original paper, \citet{Kazamaki_1977}  only considered true martingales $L$. He then formulated his criterion without stopping times; more precisely, he showed that it is sufficient for the martingale property of $Z^L$ to only require
\eqref{E cond} for constant times $\tau \equiv c \in [0,T)$ if $L$ is a true martingale. Indeed, if $L$ is a martingale and $a = 1/2$, \eqref{E cond} then follows directly from Jensen's inequality. However, for $L$ only a local martingale the finite supremum over deterministic times is usually not sufficient. Example~\ref{Ex 2dBessel} below illustrates this point. The precise result is as follows:
\begin{prop}[Deterministic times]  \label{P weak}
    If either
    \begin{align}  \label{E prop cond}
        \sup_{t \in [0,T)} \E^\Prob\left[S^{L,a,0}_t\right]<\infty \text{ or }
        \E^\Prob\left[S^{L,a,0}_T\right]<\infty,
    \end{align}
    then the following conditions are equivalent for any $a \in \R$:
    \begin{enumerate}
        \item[(i)] the process $Z^{aL}$ is a uniformly integrable $\Prob$-martingale;
        \item[(ii)] the process $S^{L,a,0}$ is a $\Prob$-submartingale on $[0,T]$.
    \end{enumerate}
    Furthermore, any of these conditions then implies that
    \begin{enumerate}
        \item[(iii)] the process  $Z^L$ is a uniformly integrable $\Prob$-martingale.
    \end{enumerate}
\end{prop}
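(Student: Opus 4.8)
I would reduce to the nondegenerate range $a\in\R\setminus\{0,1\}$ and build the argument around the identity \eqref{E alt representation}, $S^{L,a,0}=Z^{aL}\exp(\frac12(a-1)^2\langle L\rangle)\1_{\{Z^{aL}>0\}}$, together with Corollaries~\ref{C martingality} and~\ref{t novikov}. The key structural point is that the hypothesis \eqref{E prop cond} is needed \emph{only} for the implication (i)$\Rightarrow$(ii): the implications (ii)$\Rightarrow$(i) and (ii)$\Rightarrow$(iii) go through unconditionally, via optional sampling plus the already-established sufficient criteria. So the plan is (a) the ``easy'' directions (ii)$\Rightarrow$(i) and (ii)$\Rightarrow$(iii); (b) (i)$\Rightarrow$(ii) via the Girsanov measure attached to $Z^{aL}$; (c) the degenerate values $a=1$ and $a=0$.

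\textbf{The easy directions.} Assume $S^{L,a,0}$ is a $\Prob$-submartingale on $[0,T]$. Each $\tau\in\mathcal{T}$ is dominated by the deterministic time $t_\tau:=(T-1/n_\tau)\wedge n_\tau<T$, so optional sampling for submartingales gives $\E^\Prob[S^{L,a,0}_\tau]\le\E^\Prob[S^{L,a,0}_{t_\tau}]\le\E^\Prob[S^{L,a,0}_T]<\infty$, hence $\sup_{\tau\in\mathcal{T}}\E^\Prob[S^{L,a,0}_\tau]<\infty$, i.e.\ \eqref{E cond} holds with $\phi\equiv0$. Now Corollary~\ref{C martingality}, applied with $\phi\equiv0$ (for which $\liminf_{t\uparrow\infty}\phi^+(t)/t=0<|a-1|/2$), yields that $Z^{aL}$ is a uniformly integrable martingale, which is (i); and Corollary~\ref{t novikov}, applied with the constant (hence lower) function $\phi\equiv0$ and this $a\ne1$, yields that $Z^L$ is a uniformly integrable martingale, which is (iii).

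\textbf{From (i) to (ii).} Suppose $Z^{aL}$ is a uniformly integrable martingale on $[0,T]$ and set $\dd\Q^a=Z^{aL}_T\,\dd\Prob$. Since $\Q^a(Z^{aL}_t=0)=\E^\Prob[Z^{aL}_T\1_{\{Z^{aL}_t=0\}}]=\E^\Prob[Z^{aL}_t\1_{\{Z^{aL}_t=0\}}]=0$, the indicator in \eqref{E alt representation} is $\Q^a$-a.s.\ equal to $1$, so Bayes' rule yields, for $0\le s\le t\le T$,
\[
\E^\Prob\!\left[S^{L,a,0}_t\,\middle|\,\CF_s\right]=Z^{aL}_s\,\E^{\Q^a}\!\left[\exp\!\left(\tfrac12(a-1)^2\langle L\rangle_t\right)\middle|\,\CF_s\right]\ge Z^{aL}_s\exp\!\left(\tfrac12(a-1)^2\langle L\rangle_s\right)=S^{L,a,0}_s,
\]
where the inequality uses that $\langle L\rangle$ is nondecreasing and $\langle L\rangle_s$ is $\CF_s$-measurable (it is trivially true where the left side is infinite) and the last equality uses $0\cdot\infty=0$ on $\{Z^{aL}_s=0\}$. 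This is the submartingale inequality; what remains is integrability, $\E^\Prob[S^{L,a,0}_t]<\infty$ for every $t\le T$, and this is exactly what \eqref{E prop cond} supplies: with $Z^{aL}_0=1$ one has $\E^\Prob[S^{L,a,0}_t]=\E^{\Q^a}[\exp(\frac12(a-1)^2\langle L\rangle_t)]$, which is nondecreasing in $t$; under the first alternative in \eqref{E prop cond} this is $\le\sup_{s<T}\E^\Prob[S^{L,a,0}_s]<\infty$ for $t<T$ and, by monotone convergence as $t\uparrow T$ (using $\langle L\rangle_t\uparrow\langle L\rangle_T$), also at $t=T$; under the second alternative it is $\le\E^\Prob[S^{L,a,0}_T]<\infty$ for all $t\le T$. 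Hence $S^{L,a,0}$ is a $\Prob$-submartingale on $[0,T]$.

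\textbf{Degenerate cases and the main obstacle.} For $a=1$ one has $S^{L,1,0}=Z^L$ and \eqref{E prop cond} is automatic; (i), (ii), (iii) then all coincide, since a nonnegative continuous local martingale is a supermartingale, it is a submartingale on $[0,T]$ exactly when it is a true martingale there, and such a martingale on the closed interval $[0,T]$ is uniformly integrable (for $T=\infty$ one passes to $Z^L_\infty:=\lim_t Z^L_t$ and uses $\E[Z^L_\infty]=\E[Z^L_0]$, which forces closability via Fatou). For $a=0$, $Z^{0\cdot L}\equiv1$, so (i) is vacuous and the content of the equivalence — that \eqref{E prop cond} with $a=0$ \emph{alone} forces $S^{L,0,0}=\exp(\langle L\rangle/2)\1_{\{Z^L>0\}}$ to be a submartingale and $Z^L$ to be a uniformly integrable martingale — has to be argued directly; this is the step I expect to be the main obstacle, because it cannot be routed through Corollary~\ref{C martingality}, and one must show (e.g.\ via the measure of Theorem~\ref{P change}, paralleling the proof of Theorem~\ref{C abstract}) that the deterministic-time hypothesis already rules out an explosion of $\langle L\rangle$ before $T$. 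Apart from that, the only delicate point is the bookkeeping in the (i)$\Rightarrow$(ii) step above: the submartingale \emph{inequality} comes for free from the change of measure and the monotonicity of $\langle L\rangle$, but the \emph{integrability} of $S^{L,a,0}_t$ at each deterministic $t\le T$ is not automatic for a merely local $Z^{aL}$ and must be imported from \eqref{E prop cond}, with a little care to see that both formulations there deliver it, including at the endpoint $t=T$.
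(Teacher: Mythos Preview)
Your approach is essentially the paper's. For $a\notin\{0,1\}$ both proofs run through \eqref{E alt representation}: (i)$\Rightarrow$(ii) via the Girsanov measure $\Q^a$ and monotonicity of $\langle L\rangle$, with integrability at $T$ inherited from \eqref{E prop cond} (the paper just says ``as in Corollary~\ref{C subm}'', you spell out the monotone-convergence step); (ii)$\Rightarrow$(i) via optional sampling to get \eqref{E cond} and then Corollary~\ref{C martingality} (the paper phrases this as ``Corollaries~\ref{C martingality} and \ref{C subm}''); and (iii) via Corollary~\ref{t novikov}. Your observation that \eqref{E prop cond} is used only for the integrability in (i)$\Rightarrow$(ii) is correct and matches how the paper deploys it.

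On the degenerate values: your treatment of $a=1$ is fine and the paper simply does not mention it. Your flagged obstacle at $a=0$ is a genuine one---for $a=0$ the indicator in the definition of $S^{L,a,0}$ is $\1_{\{Z^L>0\}}$, whereas the one in \eqref{E alt representation} is $\1_{\{Z^{aL}>0\}}\equiv 1$, so the change-of-measure computation no longer yields the submartingale inequality if $Z^L$ can hit zero, and the cited Corollaries~\ref{C martingality} and \ref{C subm} are themselves proved only for $a\neq 0,1$. The paper's proof does not supply a separate argument for $a=0$ either, so this is not a deficiency of your proposal relative to the paper; it is a case both proofs leave implicit.
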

\begin{proof}
    The fact that (i) implies (ii) follows from \eqref{E alt representation} and the computation
    \begin{align*}
         \E^\Prob\left[\left.S^{L,a,0}_t\right|\CF_s\right] = Z^{aL}_s \E^{\Q^a}\left[\left.\exp\left( \frac{1}{2}\left(a - 1\right)^2 \langle L\rangle_t\right)\right|\CF_s\right]
        \geq Z^{aL}_s  \exp\left( \frac{1}{2}\left(a - 1\right)^2 \langle L\rangle_s\right) = S^{L,a,0}_s
    \end{align*}
    for $s < t$, where $\Q^a$ is defined by $\dd \Q^a =Z^{aL}_T \dd \Prob$. The finiteness of $\E^\Prob[S^{L,a,0}_T]$ follows as in Corollary~\ref{C subm}.
    Corollaries~\ref{C martingality} and \ref{C subm} yield the reverse direction.
    The necessity of (iii) is basically the statement of
    Corollary~\ref{t novikov}.
\end{proof}
Example~\ref{Ex 2dBessel II} illustrates that (iii) does not necessarily imply (ii) or (i).
Indeed, in order to use the same argument as in the step from (i) to (iii), one would need, given the martingale property of $Z^L$ with corresponding measure $\Q$, a condition like
\begin{align*}
    \E^\Q\left[\exp\left(\frac{1}{2} (a-1)^2 \left\langle {L} \right\rangle_T\right)\right] = \E^\Prob\left[S^{aL,1/a,0}_T\right]<\infty,
\end{align*}
replacing \eqref{E prop cond}, which translates into
\begin{align*}
    \E^\Q\left[\exp\left((a-1)\widetilde{L}_T\right)\right] = \E^\Prob\left[S^{L,a,0}_T\right]<\infty,
\end{align*}
where $\widetilde{L} = L - \langle L \rangle$.

 If $\sup_{t \in [0,T)} \E^\Prob[S^{L,a,0}_t]=\infty$, no conclusions can be drawn. Indeed, in Remark~\ref{R quad} below, we discuss two processes $L^{(1)}, L^{(2)}$, for which this supremum is infinite for $a=0$ but one of them generates a martingale through stochastic exponentiation, the other one does not.

%

\section{Examples}
In this section, we discuss several examples to highlight some of the results of the first sections. To begin with, as we allow for local martingales $L$ such that $Z^L$ has positive probability to hit zero in the criterion in \eqref{E cond}, we now provide an example of a nonnegative martingale $Z^L$ hitting zero to which the criterion could be applied to:
\begin{ex}[Stopped Brownian motion]
	The goal of this example is to show that the condition in \eqref{E cond} can be applied to the case of a Brownian motion stopped when it hits zero.  Towards this end, set
	\begin{align*}
		L_\cdot =  \1_{\{B_\cdot  > 0\}} \int_0^\cdot \frac{1}{B_t} \dd B_t = \1_{\{B > 0\}}  \left(\log(B_\cdot ) + \frac{1}{2} \int_0^\cdot \frac{1}{B_t^2} \dd t\right),
	\end{align*}
	where $B$ denotes a Brownian motion stopped when it hits zero. We know from optional stopping that $Z^L = B$ is a martingale over any finite time horizon. However, we shall not use this prior knowledge but instead check the criterion in \eqref{E cond} directly.
	Let us consider the case $a = 2$. We obtain that
	\begin{align*}
		S^{L,2,0}_\cdot = \1_{\{B_\cdot > 0\}}  B^2_\cdot  \exp\left(-\frac{1}{2}\int_0^\cdot \frac{1}{B_t^2} \dd t\right) \leq B^2_\cdot.
	\end{align*}
	Thus, \eqref{E cond}  now holds since $B^2$ is a submartingale over any finite time horizon $T<\infty$.
%
\qed
\end{ex}

From now on, we shall always assume that $T=1$.
The next example illustrates how Proposition~\ref{P weak} can be applied to check the martingale property of a stochastic exponential:
\begin{ex}[Iterative application of Kazamaki's criterion]  \label{Ex iterative}
    In this example, we study a family $\{L^{(c)}\}_{c \in \R}$ of $\Prob$-martingales
    and their corresponding stochastic exponentials. To begin with, we introduce the $\Prob$-martingale $I$ by
    \begin{align*}
        I_t := \int_0^{t} {B}_s \dd {B}_s = \frac{1}{2} ({B}^2_{t} - t),
    \end{align*}
	where $B$ denotes a Brownian motion, 
    and observe that
    Formula~1.9.3(1) on page~168 in \citet{Borodin_handbook} yields that
     \begin{align}  \label{E I int}
        \E^{\Prob}\left[\exp(\alpha I_t - \beta \langle I \rangle_t)\right] < \infty
    \end{align}
    for all $\alpha \in \R$ and $\beta \in (0, \infty)$. Now, set $L^{(c)} := c I$ for some $c \in \R$
    and observe that Kazamaki's criterion in \eqref{E Kaz criterion} holds, due to the martingality of $L^{(c)}$, if and only if $c < 2$; as otherwise
    \begin{align*}
    	\E^{\Prob}\left[\exp\left(\frac{L^{(c)}_1}{2}\right)\right] = \E^{\Prob}\left[\exp\left(\frac{c B_1^2}{4}\right)\right] \exp\left(-\frac{c}{4}\right) = \infty.
    \end{align*}

    Now consider the
    case $c \geq 2$. We want to prove that $Z^{L^{(c)}}$ is a $\Prob$-martingale.  Towards this end, set $a = 3/4$ in \eqref{E cond} and check that $\E[S^{L^{(c)}, 3/4,0}_1] < \infty$ by \eqref{E I int}. Thus, Proposition~\ref{P weak} yields that is sufficient to
    check whether $Z^{L^{(c_1)}}$ is a martingale for $c_1 = 3c/4$. If $c_1 < 2$, we are done as above. Otherwise, we iterate the argument until eventually $c_n = (3/4)^n c < 2$ for some sufficiently large $n \in \N$.

    As $B$ under the measure generated by $Z^{L^{(c)}}$ has Ornstein-Uhlenbeck dynamics, this example shows that the Wiener and Ornstein-Uhlenbeck measures are equivalent on finite time horizons.
    We also refer to Exercise~IX.2.10 in \citet{RY} for a different argument based on a study of the explosion time of a certain diffusion. \qed
\end{ex}

We now construct local martingales with different Novikov-Kazamaki orders as introduced in Subsection~\ref{SS orders}:
\begin{ex}[Novikov-Kazamaki orders]  \label{Ex strictness}
	We want to construct a family of martingales $\{\widehat{L}^{(a)}\}_{a \in \R\setminus\{1\}}$ such that $\widehat{L}^{(a)} \notin  \mathcal{NK}^0(a)$
	but $\widehat{L}^{(a)} \in  \mathcal{NK}^0(b)$ for all $b \in (a,1]$ or $b \in [1,a)$, depending on the sign of $a-1$.
	Towards this end, we modify Example~\ref{Ex iterative}. To begin with, we introduce a family $\{\widetilde{B}^{(a)}\}_{a \in \R\setminus\{1\}}$ of $\Prob$-Ornstein-Uhlenbeck processes with
$\widetilde{B}^{(a)}_0 = 0$ and with dynamics
\begin{align*}
    \dd \widetilde{B}^{(a)}_t = - \frac{1}{a-1} \widetilde{B}^{(a)}_t \dd t + \dd B_t,
\end{align*}
where $B$ denotes again a $\Prob$-Brownian motion. We now
consider the family of $\Prob$-local martingales $\{\widehat{L}^{(a)}\}_{a \in \R\setminus\{1\}}$ defined as
	\begin{align*}
		\widehat{L}^{(a)}_t := \frac{1}{a-1} \int_0^t \widetilde{B}^{(a)}_s \dd B_s.
	\end{align*}
The equivalence of the Wiener and Ornstein-Uhlenbeck measure, which we observed in Example~\ref{Ex iterative}, yields that $Z^{\widehat{L}^{(a)}}$ is a $\Prob$-martingale and thus generates a probability measure 
$\Q^{(a)}$ by $\dd \Q^{(a)} = Z^{\widehat{L}^{(a)}}_1 \dd \Prob$. Define the $\Q^{(a)}$-martingale
\begin{align*}
    \widetilde{L}^{(a)}_t := \widehat{L}^{(a)}_t -
\left\langle \widehat{L}^{(a)}\right\rangle_t = \frac{1}{a-1} \int_0^t \widetilde{B}^{(a)}_s \dd \widetilde{B}_s^{(a)} = \frac{1}{2(a-1)} \left(\left({\widetilde{B}^{(a)}_t }\right)^2 - t\right),
\end{align*}
where $\widetilde{B}^{(a)}$ is a $\Q^{(a)}$-Brownian motion, and observe that
\begin{align*}
    \E^\Prob\left[S^{\widehat{L}^{(a)},b,0}_1\right] &= \E^{\Q^{(a)}}\left[\exp\left((b-1)\widetilde{L}^{(a)}_1\right)\right] = \E^{\Q^{(a)}}\left[\exp\left(\frac{b-1}{a-1} \cdot \frac{\left(\widetilde{B}_1^{(a)}\right)^2}{2}\right)\right]
	\cdot \exp\left(-\frac{b-1}{a-1} \cdot \frac{t}{2}\right),
\end{align*}
which is finite for all $b \in (a,1]$ or $b \in [1,a)$, but infinite for $b=a$. Thus,
$\widehat{L}^{(a)} \notin  \mathcal{NK}^0(a)$, but Corollary~\ref{C subm} implies that  $\widehat{L}^{(a)} \in  \mathcal{NK}^0(b)$ for all $b \in (a,1]$ or $b \in [1,a)$, respectively.
\qed
\end{ex}

The next example discusses a local martingale $\widehat{L}$ for which $\exp(\widehat{L})$ is not a submartingale, despite having finite expectation:
\begin{ex}[Two-dimensional Bessel process I]\label{Ex 2dBessel}
In order to be consistent with Example~\ref{Ex 2dBessel II} below, we here work under a probability measure $\Q$. Let $R$ denote a two-dimensional $\Q$-Bessel process starting in $R_0=1$ with dynamics
\begin{align*}
	\dd R_t = \frac{1}{2 R_t} \dd t + \dd \widetilde{B}_t,
\end{align*}
where $\widetilde{B}$ denotes a $\Q$-Brownian motion.  Existence and uniqueness of the solution to this SDE is guaranteed by the results in Section~3.3.C of \citet{KS1}.
Let us study the local martingale
\begin{align}  \label{E 2d tildeL}
    \widetilde{L}_\cdot := -\frac{1}{2} \int_0^\cdot \frac{1}{R_t} \dd \widetilde{B}_t = - \frac{\log(R_\cdot )}{2}.
\end{align}
The $\Q$-local martingale $Z^L$ cannot be a true $\Q$-martingale. If it were, $R$ would be a Brownian motion under the corresponding measure and thus hit zero with positive probability. This event, however, has probability zero under $\Q$. Therefore, $Z^L$ is a strict $\Q$-local martingale.

 Proposition~\ref{P weak} now yields that $Y := S^{\widetilde{L},1/2,0} = \exp(\widetilde{L}/2) = R^{-1/4}$ is not a $\Q$-submartingale, even given that we can check that
\begin{align} \label{E 2d ineq}
     \E^{\Q}\left[Y_t\right] = \E^{\Q}\left[R^{-1/4}_t\right] < \infty
\end{align}
for all $t \in [0,1]$;
see also Exercise~3.3.37 in \citet{KS1}. We emphasize that $-\log(R)/4$ is a local $\Q$-martingale and thus, $Y$ has
a strictly positive drift:
\begin{align*}
    \dd Y_t = \frac{1}{32} Y_t^{9} \dd t  -\frac{1}{4} Y_t^5 \dd \widetilde{B}_t .
\end{align*}
Thus, $Y$ is not a $\Q$-supermartingale either. 
Otherwise, it also would be a local $\Q$-supermartingale. This is, however, not possible due
to its strictly positive drift.
%
\qed
\end{ex}
The next example continues the discussion in Example~\ref{Ex 2dBessel} in order to provide an example for the lack of sufficiency of (iii) for (ii) in Proposition~\ref{P weak}:
\begin{ex}[Two-dimensional Bessel process II]\label{Ex 2dBessel II}
    We continue our discussion of the two-dimensional $\Q$-Bessel process of Example~\ref{Ex 2dBessel}. For $\widetilde{L}$ defined in \eqref{E 2d tildeL} compute
    \begin{align*}
        \dd Z^{-\widetilde{L}}_t = \frac{1}{2 \sqrt{R_t}} \exp\left(-\frac{1}{8} \int_0^t \frac{1}{R_s^2} \dd s\right) \dd \widetilde{B}_t
    \end{align*}
    and note that
    \begin{align*}
        \E^\Q\left[\sqrt{\left\langle Z^{-\widetilde{L}}\right\rangle_1}\right] &\leq \E^\Q\left[ \sqrt{ \int_0^1  \frac{1}{2 R_t}\dd t }\right] \leq \sqrt{ \E^\Q\left[ R_1\right]} < \infty.
    \end{align*}
	Now, the Burkholder-Davis-Gundy inequalities \citep[see for example Theorem~3.3.28 in][]{KS1} or the results in \citet{Elworthy_Li_Yor_97} (see \eqref{E ELY} below) imply 
	 that $Z^{-\widetilde{L}}$ is a $\Q$-martingale and thus defines an equivalent  probability measure $\Prob$ by $\dd \Prob / \dd \Q =
    Z^{-\widetilde{L}}_1$. 

We then have that $Z^L$ is a $\Prob$-martingale, where we set
    $L = \widetilde{L} + \langle L \rangle$. Let us now consider $a = 3/2$ in the Novikov-Kazamaki criterion. We obtain
    $\E^\Prob[S^{L,3/2,0}_1] = \E^\Q[\exp(\widetilde{L}_1/2)] < \infty$, where  the inequality is the same as in \eqref{E 2d ineq}.  However, $S^{L,3/2,0}$ is not a $\Prob$-submartingale since
    $\exp(\widetilde{L}/2)$ is not a $\Q$-submartingale, as discussed in Example~\ref{Ex 2dBessel}. This illustrates that (iii) does not necessarily imply (ii) in
    Proposition~\ref{P weak}.
\qed
\end{ex}

\begin{remark}[On the quadratic variation]  \label{R quad}
The representation of the expectation of a nonnegative continuous local martingale $Z$ as
\begin{align}  \label{E ELY}
    \E[Z_T] = Z_0 - \lim_{y \uparrow \infty} \left(y \Prob\left(\sqrt{\langle Z\rangle_T} \geq y\right)\right)
\end{align}
for any $T>0$ in \citet{Elworthy_Li_Yor_97} implies that two continuous local martingales $Z^{L^{(1)}}$ and $Z^{L^{(2)}}$ with identically distributed quadratic variations are either both true martingales or both strict local martingales.
    This observation and Novikov's condition, which is a condition on the quadratic variation of the stochastic logarithm $L$ of a strictly positive continuous local martingale $Z^L$,
     raise the question whether the true martingality of $Z^{L^{(1)}}$  also implies the one of $Z^{L^{(2)}}$
    if  the quadratic variation processes of the logarithms agree, that is, if
    $\langle L^{(1)}\rangle \equiv \langle L^{(2)}\rangle$.

    A simple counter-example, complementing the one in \citet{Kazamaki_1977}, is provided in Examples~\ref{Ex 2dBessel} and \ref{Ex 2dBessel II}, where a $\Q$-local martingale
    $\widetilde{L}$ leads to a true $\Q$-martingale $Z^{-\widetilde{L}}$, but a strict $\Q$-local martingale $Z^{\widetilde{L}}$ despite the
    obvious fact that  $\langle \widetilde{L}\rangle \equiv \langle -\widetilde{L}\rangle$. One might now wonder whether the loss of the martingale property is due to the change of sign in the logarithm, which also changes the instantaneous correlation of the stochastic exponential with the driving Brownian motion. Although this is, by construction of the example, true in this specific case,  there exist examples of local martingales $L^{(1)}, L^{(2)}$ with $\text{sign}(L^{(1)}) = \text{sign}(L^{(2)})$ and  $\langle L^{(1)}\rangle \leq \langle L^{(2)}\rangle$ such that  $Z^{L^{(1)}}$ is a strict local martingale while  $Z^{L^{(2)}}$ is a true martingale; or  such that  $Z^{L^{(1)}}$ is a non-uniformly integrable local martingale  while  $Z^{L^{(2)}}$ is a uniformly integrable martingale.

    One such example is discussed on page~297 in \citet{Kazamaki_1983}. There, a Brownian stopping time $\tau$ is constructed such that $L = B^\tau$ for some Brownian motion $B$ leads to a non-uniformly integrable local martingale $Z^L$. On the other hand, by means of Corollary~\ref{C martingality}, it can be shown that $Z^{2L}$ is a uniformly integrable martingale.  Another such example  is constructed in \citet{DS_counter}, where two local martingales $L^{(1)}, L^{(2)}$ are considered with $\langle L^{(1)} + L^{(2)}\rangle = \langle L^{(1)}\rangle  + \langle L^{(2)}\rangle \geq \langle L^{(1)}\rangle$ such that $Z^{L^{(1)}}$ is a non-uniformly integrable local martingale, but $Z^{L^{(1)} + L^{(2)}}$ is a uniformly integrable martingale.  
\qed
\end{remark}

In the next example, we study the martingale property of stochastic exponentials related to the three-dimensional Bessel process:
\begin{ex}[Three-dimensionsal Bessel process]
We study the three-dimensional Bessel process, denoted here by $R$, with initial value $R_0=1$ and with dynamics
\begin{align*}
	\dd R_t = \frac{1}{R_t} \dd t + \dd B_t
\end{align*}
for some Brownian motion $B$.  Existence and uniqueness of the solution to this SDE is again guaranteed by the results in Section~3.3.C of \citet{KS1}.

Let us consider the local martingales
\begin{align}  \label{E Def L}
	L^{(1)}_\cdot := \int_0^\cdot \frac{1}{R_t} \dd B_t = \log(R_\cdot ) - \frac{1}{2} \int_0^\cdot \frac{1}{R^2_t} \dd t, \text{ } L^{(2)} = -L^{(1)}
\end{align}
and the corresponding stochastic exponentials $Z^{L^{(1)}}$ and $Z^{L^{(2)}}$, where the identity in \eqref{E Def L}
 follows from It\^o's rule.
 Since $\sup_{0 \leq t < \infty} \E^\Prob[1/R^2_t] < \infty$,
the local martingales $L^{(1)}$ and $L^{(2)}$
 are actually true martingales on any finite time horizon; see Exercise~II.20(d) in \citet{Protter} and Section~3.2 of \citet{KS1}.
 It is clear that $\langle L^{(1)}\rangle \equiv \langle L^{(2)}\rangle$. Let us now compute
 $Z^{L^{(1)}}$ and $Z^{L^{(2)}}$:
 \begin{align*}
 	Z^{L^{(1)}}_t
	&= \exp\left( \log(R_t) - \frac{1}{2} \int_0^t \frac{1}{R^2_s} \dd s-  \frac{1}{2} \int_0^t \frac{1}{R^2_s}\dd s\right)
	= R_t \exp\left( -\int_0^t \frac{1}{R^2_s} \dd s\right)
	= 1+\int_0^t \exp\left( -\int_0^s \frac{1}{R^2_u} \dd u\right) \dd B_s,  \\
 	Z^{L^{(2)}}_t
	&= \exp\left(- \log(R_t) + \frac{1}{2} \int_0^t \frac{1}{R^2_s} \dd s-  \frac{1}{2} \int_0^t \frac{1}{R^2_s} \dd s\right)
	= \frac{1}{R_t}.
\end{align*}
It is well-known that the reciprocal of a three-dimensional Bessel process is a strict local martingale; see Exercise~3.3.36 in \citet{KS1}. However, since $Z^{L^{(1)}}$ can be represented as a stochastic integral with respect to Brownian motion of a bounded, continuous process, it is a true martingale. This yields another example for two true martingales $L^{(1)}, L^{(2)}$, such that $\langle L^{(1)}\rangle \equiv \langle L^{(2)}\rangle$, but $Z^{L^{(1)}}$ is a true martingale while $Z^{L^{(2)}}$ is not.

Indeed, the quadratic variations
\begin{align*}
	\langle Z^{L^{(1)}} \rangle_t  &= \int_0^t \exp\left( -2 \int_0^s \left(Z_u^{L^{(2)}}\right)^2 \dd u\right) \dd s, \\
	\langle Z^{L^{(2)}} \rangle_t &= \int_0^t  \left(Z_s^{L^{(2)}}\right)^4 \dd s
\end{align*}
have quite different tail behavior.  We remark that $Z^{L^{(1)}}$ is one of these instances for which Novikov's condition does not hold (since otherwise $Z^{L^{(2)}}$ would be a true martingale), but $Z^{L^{(1)}}$ is a true martingale.

Let us now study Kazamaki's criterion, which states that $\mathcal{E}(L)$ is a true
martingale for some local martingale $L$ if $\exp(L/2)$ is a submartingale; see Corollary~\ref{C subm}. This condition is also sufficient, although not necessary, for $\mathcal{E}(L)$ being a true martingale, and is weaker than
Novikov's condition; see Corollary~\ref{C orders}.
To start, consider the two processes $C = \exp(L^{(1)}/2)$ and $D = \exp(L^{(2)}/2) = 1/C$. It\^o's
formula yields the
dynamics
\begin{align*}
    \dd C_t &= \frac{C_t}{2 R_t} \left(\dd B_t + \frac{1}{4 R_t} \dd t\right),\\
    \dd D_t &= \frac{D_t}{2 R_t} \left(-\dd B_t + \frac{1}{4 R_t} \dd t\right);
\end{align*}
these dynamics look very similar.

We argued above that both $L^{(1)}/2$ and $L^{(2)}/2$ are true martingales. Exponentials
of martingales are, by Jensen's inequality, submartingales, provided they are integrable.
This observation, and the fact that $Z^{L^{(2)}}$ is not a martingale, yields directly that
$\E^\Prob[D_t] = \infty$ for all $t>0$.  On the other side, we obtain from \eqref{E Def L} that
$0 \leq C_t \leq \sqrt{R_t}$. However, $R$ has finite positive moments; indeed the moments of
$R^p$ and $\widetilde{B}^{p + 1}$ agree for any $p>-1$, where $\widetilde{B}$ denotes a Brownian motion starting in $R_0$ and being stopped in zero. This follows from the well-known connection of Brownian and Bessel measure;
see for example \citet{Perkowski_Ruf}.
We remark that the moments of $C$ can also be explicitly computed by means of Formula~1.20.8 on page~386 in
\citet{Borodin_handbook}.  Thus, $Z^{L^{(1)}}$ represents an example that does not satisfy
Novikov's condition, but satisfies Kazamaki's criterion.

One might wonder what the dynamics of $R$ are under the probability measure $\widetilde{\Q}$ corresponding
to the Radon-Nikodym derivative $Z^{L^{(1)}}$. By Girsanov's theorem, $R$ has dynamics
\begin{align*}
	\dd R_t = \frac{2}{R_t} \dd t + \dd \widetilde{B}_t
\end{align*}
where $\widetilde{B}$ is a $\widetilde{\Q}$-Brownian motion. Thus, $R$ is a $\widetilde{\Q}$-Bessel process of dimension five; see Section~3.3.C of \citet{KS1}.

In Subsection~\ref{SS deterministic}, we discussed the obvious fact that the supremum over deterministic times is, in general, smaller than the supremum over stopping times. The three-dimensional Bessel process yields a simple illustration of this fact. We already observed that $\sup_{0 \leq t \leq 1} \E^\Prob[1/R^2_t] < \infty$.  Consider now any $n \in \N$ and the first hitting time $\tau_n$ of $1/n$ by $R$ and infinity otherwise. Then we obtain that
$\E^\Prob[1/R^2_{\tau_n \wedge 1}] \geq n \E^{\Prob}[1/R_{\tau_n \wedge 1} \1_{\{\tau_n \leq 1\}}] = n \widetilde{\Prob}(\tau_n \leq 1) \rightarrow \infty$ as $n \uparrow \infty$, where $\widetilde{\Prob}$ denotes the probability measure under which $R$ is Brownian motion.  Here, $\widetilde{\Prob}(\tau_n \leq 1)$ does not tend to zero as it represents the probability of a Brownian motion started in $1$ to hit $0$ before time $1$. 
\qed
\end{ex}%
\appendix
\section{Lower functions}  \label{A lower}
The formulation of Corollary~\ref{t novikov} contains the notion of lower functions, which we
briefly recall here.

\begin{defin}[Lower and upper function] \label{D lower}
    Let $B$ denote a Brownian motion on some probability space $(\Omega, \CF, \Prob)$ and let $\phi: [0,\infty) \rightarrow \R$ be a continuous function. Define the event
    \begin{align*}
        G := \{\omega \in \Omega: B_s(\omega) < \phi(s) \text{ for all } s \geq t(\omega) \text{ for some } t(\omega)>0\}.
    \end{align*}
    If $\Prob(G ) = 0$ ($\Prob(G) = 1$), then $\phi$ is called a lower (upper) function.
\end{defin}
Due to Blumenthal's zero-one law we have that either $\Prob(G) = 0$ or $\Prob(G) = 1$, thus any continuous function is either a lower or an upper function; see Section~1.8 in \citet{ItoMcKean} and Section~2 in \citet{ChernyShiryaev}. Lower functions are, for example,
all constant functions or the functions $\phi(t) = C \sqrt{t}$ or $\phi(t) = \sqrt{2 t \log(\log(t))}$; this can be checked by an application of Kolmogorov's test; see Problem~1.8.3 in \citet{ItoMcKean}.

The following result appears as Lemma~2.2 in \citet{ChernyShiryaev}. It is a corollary of Girsanov's formula.
\begin{lemma}[Limits involving lower and upper functions]  \label{L lower}
    If $\phi_1$ is a continuous lower function and $\phi_2$ is a continuous upper function,
    then
    \begin{align*}
        \limsup_{t \uparrow \infty} (B_t - \phi_1(t)) = \infty; \text{ and }
         \limsup_{t \uparrow \infty} (B_t - \phi_2(t)) = -\infty.
    \end{align*}
\end{lemma}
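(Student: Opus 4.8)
The plan is to deduce both identities from a single structural fact: the classes of continuous lower and upper functions are invariant under adding an arbitrary constant. Granting this, the two $\limsup$ identities follow by writing the relevant event as a countable union and invoking the definition of the event $G$ in Definition~\ref{D lower}.

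\textbf{Step 1 (invariance under constant shifts).} I would first show that if $\phi$ is a continuous lower function then so is $\phi+c$ for every $c\in\R$, and, symmetrically, that $\phi+c$ is a continuous upper function whenever $\phi$ is. If $c\le 0$ and $\phi$ is a lower function (or $c\ge 0$ and $\phi$ is an upper function), this is trivial, since the event $G$ attached to $\phi+c$ is then contained in (respectively, contains) the one attached to $\phi$. For the remaining case, say $\phi$ a lower function and $c>0$, I would put $h(s):=c\,(s\wedge1)$; this path is absolutely continuous with $\dot h\in L^2([0,\infty))$ and is identically equal to $c$ on $[1,\infty)$. Girsanov's theorem then produces a probability measure $\Q$, equivalent to $\Prob$ with the strictly positive probability density $\dd\Q/\dd\Prob=\exp(cB_1-c^2/2)$, under which $\widetilde B_\cdot:=B_\cdot-h(\cdot)$ is a Brownian motion. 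On the event $\{B_s<\phi(s)+c\text{ for all }s\ge t,\text{ some }t>0\}$ one has $\widetilde B_s=B_s-c<\phi(s)$ for every $s\ge t\vee1$, so this event is contained in the event $G$ of Definition~\ref{D lower} associated with the Brownian motion $\widetilde B$ and the function $\phi$; since $\phi$ is a lower function and the probability of $G$ is the same for every Brownian motion, this latter event has $\Q$-probability $0$, hence $\Prob$-probability $0$ by equivalence. Thus $\phi+c$ is a lower function. The case of an upper function with $c<0$ is the mirror image: there $-c>0$, and the event $\{B_s<\phi(s)+c\text{ eventually}\}$ now \emph{contains} $\{\widetilde B_s<\phi(s)\text{ eventually}\}$, which has $\Q$-probability $1$.

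\textbf{Step 2 (the $\limsup$ identities).} For a continuous lower function $\phi_1$, I would write $\{\limsup_{t\uparrow\infty}(B_t-\phi_1(t))<\infty\}=\bigcup_{n\in\N}\{\limsup_{t\uparrow\infty}(B_t-\phi_1(t))<n\}$; on the $n$-th set there is a random $T>0$ with $B_t<\phi_1(t)+n$ for all $t\ge T$, so this set lies in the event $G$ of Definition~\ref{D lower} for the function $\phi_1+n$, a lower function by Step~1, hence a $\Prob$-null set. Taking the union over $n\in\N$ yields $\limsup_{t\uparrow\infty}(B_t-\phi_1(t))=\infty$ almost surely. For a continuous upper function $\phi_2$, I would write $\{\limsup_{t\uparrow\infty}(B_t-\phi_2(t))>-\infty\}=\bigcup_{n\in\N}\{\limsup_{t\uparrow\infty}(B_t-\phi_2(t))>-n\}$; on the $n$-th set it is \emph{not} true that $B_s<\phi_2(s)-n$ for all large $s$, so this set lies in the complement of the event $G$ for the upper function $\phi_2-n$ (upper by Step~1), a set of probability $1$; hence the $n$-th set is $\Prob$-null and so is the union, giving $\limsup_{t\uparrow\infty}(B_t-\phi_2(t))=-\infty$ almost surely.

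\textbf{Main obstacle.} The only subtle point is Step~1: over an infinite time horizon two Brownian motions differing by an asymptotically constant drift have mutually singular laws, so one must ensure that the change of measure used is a genuine equivalence rather than merely a local one. Choosing the Cameron--Martin shift $h$ to be constant on $[1,\infty)$ sidesteps this, reducing it to the finite-horizon Girsanov theorem, where the density $\exp(cB_1-c^2/2)$ is visibly a strictly positive probability density; everything else is routine bookkeeping with the definition of lower and upper functions.
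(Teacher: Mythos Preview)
Your argument is correct, and it is precisely the Girsanov-based reasoning the paper alludes to: the paper does not supply a proof of this lemma but merely cites Lemma~2.2 in \citet{ChernyShiryaev} and remarks that ``it is a corollary of Girsanov's formula.'' Your Step~1 makes explicit the key ingredient---that the class of lower (respectively, upper) functions is invariant under constant shifts, with the nontrivial direction handled by a compactly supported Cameron--Martin shift so that the change of measure is a genuine equivalence on $\CF_\infty$---and Step~2 is then immediate bookkeeping.
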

\begin{remark}[Functions of linear growth] \label{R linear}
	Observe
 that $\liminf_{t \uparrow \infty}\phi^+(t)/t = 0$ for any lower function $\phi$, where we denote by $\phi^+$ the positive part of a function $\phi$. However,  the function $\phi(t) = \sqrt{3 t \log(\log(t))}$ illustrates that sublinear growth is not sufficient for a function being lower.  \qed
\end{remark}

\bibliographystyle{apalike}
\small\bibliography{aa_bib}{}

\section*{Addendum}
As pointed out by Don McLeish and Zhenyu Cui in a personal communication (November 2012), the definition of $\tau_i$ in the proof of Theorem~\ref{C abstract} leads to an error in its proof; precisely, the convergence in  \eqref{E RV lim} on the set $H$ cannot be guaranteed. By slightly changing  the definition of $\tau_i$ we will correct this error here. 

In the notation of that proof, define the sequence of stopping times $\{\widetilde{\tau}_i\}_{i \in \N}$ as
        \begin{align*} 
        \widetilde{\tau}_i := \inf \left\{t \geq 0: f\left(\widetilde{L}_t + \left\langle \widetilde{L} \right\rangle_t,  \left\langle \widetilde{L} \right\rangle_t\right)  \exp\left(-\widetilde{L}_t - \frac{\left\langle \widetilde{L} \right\rangle_t}{2}\right)  \geq i\right \}. 
    \end{align*}
 Observe next that there exists $j_i \in \N$  for all $i \in \N$  such that
 \begin{align*}
 	\Q\left(H \cap \left\{\widetilde{\tau}_i > \frac{j_i-1}{j_i}\right\}\right) \leq \frac{\Q(H)}{2^{i+1}}
 \end{align*}
    due to the continuity of probability measures and the fact that $H \subset \{\tau_i<1\}$; this inclusion holds since $\widetilde{L}$ can be represented as a time-changed Brownian motion, as in the proof of  Theorem~\ref{C abstract}. Now, modify the definition of $\tau_i$ and define $\tau_i$ as ${\tau}_i := \widetilde{\tau}_i \wedge (j_i-1)/j_i$.  Then $\tau_i \in \mathcal{T}$ and  \eqref{E RV lim} holds on the set $\widetilde{H} := \cap_{i = 1}^\infty \{\tau_i = \widetilde{\tau}_i\} \subset H$. Since $\Q(\widetilde{H})>0$, we can now conclude as in the proof of Theorem~\ref{C abstract}.

\end{document}